%% file: main.tex
\documentclass[table,11pt]{article}
\usepackage{setspace}
\usepackage[margin=1in]{geometry}
\usepackage{comment}
\usepackage{hyperref}
\hypersetup{
  colorlinks=true,
  linkcolor=black,
  citecolor=black,
  urlcolor=blue,
}
\usepackage{amsmath}    
\usepackage{amssymb}    
\usepackage{amsthm}     
\usepackage{mathtools}
\usepackage{cleveref}
\usepackage{xfrac}
\usepackage{algorithm}
\usepackage{algpseudocode}
\usepackage{float}

\DeclareMathOperator{\vertices}{vert}

\newtheorem{theorem}{Theorem}

\newtheorem{corollary}[theorem]{Corollary}
\newtheorem{proposition}[theorem]{Proposition}

\usepackage{xcolor}
\usepackage[round]{natbib}
\usepackage{titlecaps}
\usepackage{titlesec}

\usepackage{import}

\titleformat{\section}
  {\normalfont\fontsize{14}{15}\selectfont\bfseries}
  {\thesection}{0.5em}{}

\newcommand{\R}{\mathbb{R}}

\usepackage{nicefrac}
\usepackage{tikz}
\usepackage{pgfplots}
\pgfplotsset{compat=1.18}
\pgfplotsset{
  table/search path={../manuscript/}
}

\usetikzlibrary{
  pgfplots.groupplots,
  patterns
}
\usepackage{booktabs}

\colorlet{blueI}{gray!5}   
\colorlet{blueII}{gray!20}  
\colorlet{blueIII}{gray!40} 
\colorlet{blueIV}{gray!60}  

\newcommand{\cI}[1]{\cellcolor{blueI}{#1}}     
\newcommand{\cII}[1]{\cellcolor{blueII}{#1}}   
\newcommand{\cIII}[1]{\cellcolor{blueIII}{#1}} 
\usepackage{makecell}
\usepackage{multirow}
\usepackage{subcaption}
\definecolor{sorGreen}{HTML}{417505}
\definecolor{sorRed}{rgb}{0.784,0.157,0.118}
\definecolor{ORBlue}{HTML}{2F468C}

\usepackage{pgfplotstable}
\usetikzlibrary{pgfplots.groupplots}
\usetikzlibrary{patterns}
\usepackage{graphicx}
\makeatletter
\let\oldproof\proof

\renewcommand{\proof}{%
  \@ifnextchar[{\proof@opt}{\proof@noopt}%
}

\def\proof@opt[#1]{%
  \oldproof[#1]%
  \@ifnextchar\bgroup{\@gobble}{}%
}

\def\proof@noopt{%
  \oldproof
  \@ifnextchar\bgroup{\@gobble}{}%
}
\makeatother

\begin{document}

\def\spacingset#1{\renewcommand{\baselinestretch}%
{#1}\small\normalsize} \spacingset{1}


\begin{center}

\vspace*{-0.5em}

{\Large\bfseries
Network Flexibility Design Under Endogenous Uncertainty
\par}

\vspace{0.7em}

{\large
Soroush Fatemi-Anaraki\textsuperscript{a},
Martin Grunow\textsuperscript{a,b},
Giovanni Pantuso\textsuperscript{c},
Stefan Weltge\textsuperscript{d}
\par}

\vspace{0.65em}

\begin{minipage}{0.88\textwidth}
\small
\setlength{\parindent}{0pt}
\setlength{\parskip}{0.12em}

\textsuperscript{a}\,
TUM School of Management, Technical University of Munich, Germany

\textsuperscript{b}\,
Munich Data Science Institute, Technical University of Munich, Germany

\textsuperscript{c}\,
Department of Mathematical Sciences, University of Copenhagen, Denmark

\textsuperscript{d}\,
Department of Mathematics, Technical University of Munich, Germany

\vspace{0.4em}

\centering
\footnotesize

\href{mailto:soroush.fatemianaraki@tum.de}
{\texttt{soroush.fatemianaraki@tum.de}}
\quad
\href{mailto:martin.grunow@tum.de}
{\texttt{martin.grunow@tum.de}}
\quad
\href{mailto:gp@math.ku.dk}
{\texttt{gp@math.ku.dk}}
\quad
\href{mailto:weltge@tum.de}
{\texttt{weltge@tum.de}}

\end{minipage}

\end{center}

\vspace{0.7em}

\begin{abstract}
\noindent Manufacturing firms face uncertainty in both supply and demand.
They implement process flexibility, enabling plants to produce multiple products, to hedge against supply--demand mismatches.
Assigning more products to a plant results in efficiency losses at the plant, whereas limited flexibility may improve efficiency because of the specialization effect.
Furthermore, the flexibility design determines the sourcing locations of products, thereby affecting demand due to the country-of-origin effect and exposure to tariffs.
We incorporate these mechanisms into the supply network flexibility design problem by modeling supply and demand uncertainties as endogenous, that is, as functions of the first-stage design decisions.
We propose a two-stage stochastic program and solve it via a decomposition scheme that relies on distribution-specific optimality cuts.
To avoid exhaustive enumeration of all distributions, we derive distribution-free formulations that are valid for all distributions and for subsets of supply and demand distributions.
Through extensive computational experiments, we demonstrate the superior performance of our solution approach and distribution-free formulations relative to exhaustive distribution enumeration and state-of-the-art inequalities, respectively.
Moreover, we explain how the problem structure affects the effectiveness of the proposed formulations.
From a managerial perspective, we show the trade-offs between endogenous effects and how they affect the optimal flexibility design.
\end{abstract}

\begin{center}
\begin{minipage}{0.90\textwidth}
\small
\noindent\textbf{Keywords:}
Network flexibility design;
stochastic programming;
endogenous uncertainty;
distribution-free upper bounds.
\end{minipage}
\end{center}
\vspace{0.6em}

\onehalfspacing

\input{introduction_alt}

\input{literature}
\input{model}
\input{strengthening}
\input{solution}
\input{experiments}
\input{managerial}
\input{conclusion}

\section{Code and Data Disclosure}
The code and data to support the numerical experiments in this paper can be found at \url{https://github.com/soroush-ft/network_flexibility_design}.

\section{Acknowledgements}
This work was funded by the Deutsche Forschungsgemeinschaft (DFG, German Research Foundation) under Grant 277991500/GRK2201 and Novo Nordisk Foundation under grant NNF24OC0089770.

\singlespacing
\bibliography{../manuscript/references}

\appendix
\input{appendix_tables}

\end{document}

%% file: introduction_alt.tex
\section{Introduction}\label{introduction}

Manufacturing firms operate under uncertainty in both supply and demand \citep{simangunsong2012supply,rajaram2002product, bengtsson2002valuation}.
To hedge against the supply-demand mismatch, companies invest in process flexibility, enabling plants to manufacture multiple products \citep{sethi1990flexibility}.
At the supply chain design level, they must determine the \emph{flexibility design}, i.e., the assignment of products to plants, that maximizes expected profits.
The existing literature on the flexibility design problem commonly assumes that supply and demand uncertainties are exogenous, i.e., their distributions are fixed and independent of the flexibility design (see, e.g., \citeauthor{feng2017process}, \citeyear{feng2017process}; \citeauthor{wang2021review}, \citeyear{wang2021review}).
We argue that this assumption is restrictive because flexibility design inevitably affects the uncertainties of both supply and demand. 

From the supply perspective, adding more product variety to a plant results in efficiency losses.
It may reduce available capacity due to changeovers between different products \citep{mak2009stochastic}.
Moreover, products often contain several features offered in multiple options.
Therefore, assigning more products to a plant raises feature-option combinations, which in turn increase the changeover complexity \citep{sun2018car}.
In fact, \cite{choudhary2018flexibility} find that the changeover loss due to the number of products assigned to a plant has a significant negative impact on manufacturing productivity.
Empirical studies on the impact of product variety on the performance of automotive assembly plants further show that parts complexity negatively affects productivity \citep{macduffie1996product}.
In addition, higher daily fluctuations in option content have a negative effect on total labor hours per product and assembly line downtime \citep{fisher1999impact}.
Conversely, limiting the number of products assigned to a plant allows for the use of specialized equipment and workforce, resulting in higher productivity due to the specialization effect.
Building on these empirical observations, we argue that supply uncertainty depends on the network flexibility design.

On the demand side, the choice of flexibility design in a global supply chain affects the sourcing locations of products, thereby impacting their price, perceived quality, and customers' buying intentions.
When importing products manufactured at offshore plants, companies may incur additional costs in the form of tariffs and duties based on the country of origin.
These costs are typically passed on to consumers in the form of inflated prices, which in turn reduces market demand.
This issue became salient in 2018, when the United~States levied tariffs of 10--50\% on \$283 billion worth of imports that led to an increase in product prices of 10--30\% \citep{amiti2019impact}.
\cite{Hinz2026Tariffs} show that in 2025, $96\%$ of tariff costs were passed on to the buyers in the United~States, resulting in a sharp drop in trade volumes.
In the wake of high tariffs and uncertain trade policies, companies ought to reconsider their supply chain strategies \citep{dong2020impact}.
In addition, the image of the country from which a product is sourced affects consumers' evaluation of its quality \citep{verlegh2005country} and their purchase intentions \citep{peterson1995meta}.
Even when consumers are unaware of a product's attributes, they evaluate it positively based on its country of origin \citep{han1989country}.
It is further shown that consumers may have a bias against foreign products in favor of domestically produced alternatives, which is reflected in their perceptions and buying intentions \citep{balabanis2004domestic}.
Consequently, sourcing locations induced by the flexibility design affect market demand.

Given the impact of the flexibility design on both supply and demand, we extend the literature by modeling and solving the problem under endogenous uncertainty.
We formulate it as a two-stage stochastic program such that the second-stage uncertainty depends on the first-stage design decisions.
More formally, let $I$ be the set of plants and $J$ be the set of products.
A flexibility design is a set $E \subseteq I \times J$, where $(i,j) \in E$ indicates that plant $i$ can produce product $j$.
For each pair of plant and product $(i,j)$, its investment cost is $s_{i,j}$, the processing time of a unit of product $j$ at plant $i$ is $r_{i,j}$, and the profit from producing one unit of product $j$ at plant $i$ is $q_{i,j}$.
The production capacity of plant $i$ is a random variable $\Xi_{i}^{c}$, and the market demand for product $j$ is a random variable $\Xi_{j}^{d}$.
We denote by $\Xi$ the joint random vector of supply and demand, and by $\xi$ its realization.
For each realization $\xi$ and a given design $E$, we denote by $f(E, \xi)$ the maximum profit attainable subject to the available supply and demand in the network design.
The standard network design problem seeks to find a design $E^{*}$ that maximizes 
$\mathbb{E} \left[f(E, \Xi)\right] - \sum_{(i,j) \in E} s_{i,j}$, where the expectation is taken with respect to a \emph{fixed} distribution of $\Xi$.
We extend the standard problem to the case where uncertainty is \emph{endogenous}, meaning that the distribution of $\Xi$ depends on the flexibility design $E$.

Incorporating endogenous uncertainty is challenging for two reasons.
Firstly, it is non-trivial to abstract design-level properties of $E$ that capture the relationship between $E$ and the resulting distribution of $\Xi$ while keeping the optimization model computationally tractable.
To this end, we build on empirical work by \cite{choudhary2018flexibility} and use the number of products assigned to each plant (node degree) to determine its supply distribution.
Regarding demand uncertainty, we account for the location of plants supplying a product to determine its demand distribution.
Individual plants have identical impacts on a product's demand if they have similar tariff rates or country-of-origin effects, e.g., if they are located in the same country.
Therefore, we may group such plants into zones, and model demand uncertainty as a function of the sourcing zones of products.
This is without loss of generality, as we may have as many zones as plants.

Secondly, solving the flexibility design problem under endogenous uncertainty is substantially more challenging than the standard case.
The classical L-shaped method \citep{van1969shaped} can be directly applied to solve standard two-stage stochastic programs \citep{birge2011introduction}.
However, when  uncertainty is endogenous, optimality cuts generated at any iteration of the L-shaped method correspond to the specific distribution of supply and demand  induced by the first-stage design.
Hence, the optimality cuts impose a valid upper bound only for the first-stage designs that induce the same distribution.
This is in contrast with the standard two-stage stochastic program where optimality cuts impose valid upper bounds on all first-stage decisions. 
\cite{pantuso2025shaped} extend the L-shaped method by making optimality cuts dependent on the distribution.
Adopting the same paradigm, we implement the L-shaped method and derive \emph{distribution-specific} optimality cuts based on node degrees of the plants and zone-assignments of the products in the first-stage design. 

If the decomposition algorithm only relies on distribution-specific cuts, it must visit all distributions at least once before it converges.
This is highly inefficient, as the number of distributions grows exponentially in the number of products and plants.
To mitigate this limitation, \cite{pantuso2025solutionstochasticfacilitylocation} propose a single inequality that imposes a valid upper bound on  all possible distributions, hence being valid for all first-stage designs.
Furthermore, \cite{pantuso2025shaped} compute an inequality in every iteration of the decomposition approach using the incumbent first-stage solution and a dominant scenario to impose a valid upper bound on all first-stage solutions.
These findings represent the state of the art and serve as natural benchmarks for our decomposition algorithm.

We propose an alternative approach to improve the efficiency of the decomposition algorithm and derive three new classes of \emph{distribution-free} formulations that strengthen the relaxed master problem using a small number of inequalities. 
We embed the second-stage problem in the relaxed master problem, and impose upper bounds on the expected second-stage profit that are valid for all distributions, subsets of supply distributions, and subsets of demand distributions.
The variant of our distribution-free formulations that is valid for all distributions implies the state-of-the-art cuts proposed by \cite{pantuso2025shaped}.
Moreover, they imply the cut proposed by \cite{pantuso2025solutionstochasticfacilitylocation} when the supply and demand distributions are independent from each other.

Our contributions are as follows. 
We extend the flexibility design literature by relaxing the assumption that the second-stage uncertainty is exogenous and model the problem as a two-stage stochastic program with endogenous uncertainty.
Our modeling approach is generic and captures the impact of two distinct design properties, namely, the degrees and zone assignments of nodes, on the second-stage uncertainty.
To solve the problem, we implement the L-shaped method and derive distribution-specific optimality cuts.
This implementation serves as our baseline approach.
To avoid exhaustive distribution enumeration, we propose three new classes of distribution-free formulations that strengthen the relaxed master problem, thereby significantly improving the efficiency of the baseline approach.

We perform extensive numerical experiments to evaluate the performance of our baseline approach and distribution-free formulations.
First, we compare the baseline approach against the exhaustive distribution-enumeration approach to highlight its efficiency.
While both approaches must visit all distributions, the exhaustive distribution-enumeration approach is substantially slower;
it requires, on average, $1.77$ times as many iterations and $2.44$ times as much solution time to converge. 
Second, we augment the baseline approach with our distribution-free formulations and compare the performance against the state-of-the-art inequalities.
We investigate three uncertainty regimes, namely, (1) endogenous supply and demand, (2) exogenous supply and endogenous demand, and (3) endogenous supply and exogenous demand.
Experiments confirm that our distribution-free formulations improve the efficiency of the baseline approach both in terms of solution time and the optimality gap, under all uncertainty regimes.
Furthermore, they outperform the state-of-the-art inequalities under uncertainty Regimes~(1) and~(2) while showing comparable performance under Regime~(3).

Third, through a sensitivity analysis, we demonstrate how the right-hand side of the constraints  used in the distribution-free formulations, as determined by the endogenous uncertainty mechanism, affect their relative performance.
Based on this analysis, if the problem structure does not clearly favor a particular class of formulations, it is most reliable to use the combination of all distribution-free formulations.
Fourth, we show that our approach computes high-quality lower bounds in the early stages of the solution process for two representative instances.

For an illustrative example, we demonstrate how endogenous supply and demand effects jointly shape the optimal network design. 
Considering low and high levels of supply variability, we show that endogenous effects can lead to either more or less flexible designs than the exogenous benchmark (the optimal design excluding endogenous effects).
In our example, the optimal design under endogenous uncertainty achieves an objective value improvement of up to $3.2\%$ and $2.82\%$ relative to the exogenous benchmark under low and high supply-variability settings, respectively.
Moreover, our analysis outlines clear thresholds at which the optimal design changes as the endogenous effects vary, thereby providing insight into the robustness of the optimal design to perturbations in the parameters governing these effects.

The rest of the paper is organized as follows.
In Section~\ref{sec:literature}, we review the literature on the flexibility design problem, modeling tariffs in supply network planning, and modeling stochastic problems under endogenous uncertainty.
In Section~\ref{sec:model}, our two-stage stochastic programming formulation is provided.
Section~\ref{sec:strengthening} introduces our distribution-free formulations.
The L-shaped method with distribution-specific optimality cuts is presented in Section~\ref{sec:solution_approach}.  
Sections~\ref{sec:experiments} and~\ref{sec:managerial} present computational and managerial insights, respectively.
We conclude the paper in Section~\ref{sec:conclusion}.

%% file: literature.tex
\section{Literature Review}\label{sec:literature}

\subsection{Flexibility Design Problem}\label{sec:intro_literature}

The flexibility design literature assumes the uncertainty in the second stage is exogenous.
In this setting, increasing flexibility improves the ability of the supply network to match supply and demand.
Then, a fully flexible design ($E = I \times J$) maximizes future sales.
However, it is costly and difficult to implement.
Therefore, a stream of literature focuses on sparse designs that perform well under uncertainty.
\cite{jordan1995principles} consider a production system under demand uncertainty with identical plants and products, and propose the well-known long-chain design, wherein each plant (product) is assigned to two products (plants), such that the whole design is connected.
The long-chain design has received substantial attention in the literature (see, e.g., \citeauthor{chou2010design}, \citeyear{chou2010design}; \citeauthor{simchi2012understanding}, \citeyear{simchi2012understanding}; \citeauthor{wang2015process}, \citeyear{wang2015process}; \citeauthor{desir2016sparse}, \citeyear{desir2016sparse}).

Several studies incorporate heterogeneity across plants, products, and unit profits, and include the investment costs.
\cite{mak2009stochastic} propose a two-stage stochastic programming formulation and solve it using a Lagrangian-based heuristic.
\cite{feng2017process} generalize the problem setting to the case where processing times differ for each pair of plants and products.
They model the problem as a two-stage stochastic program and propose an accelerated Bender's decomposition approach and a fix-and-optimize heuristic to solve it.
Overall, the modeling literature is sparser than the analytical stream (see \cite{wang2021review} for a review).
Our work falls into the modeling stream and proposes a two-stage stochastic programming formulation in which second-stage supply and demand uncertainties are endogenous and depend on the first-stage design.

\subsection{Modeling Tariff Effects}\label{sec:literature_tariffs}

Modeling trade barriers in flexibility design is a pressing issue given recent geopolitical events.
This is a challenging task because allocation quantities of production volumes to individual plants are only determined once the uncertainties of supply and demand realize, long after the investment decisions were made.
On the other hand, knowing these allocation quantities is crucial to understanding the true costs of offshoring and the resulting impact on product prices.
This problem simplifies under the assumption of fixed prices, which allows exogenous modeling of demand and reflects tariff effects by decreasing the company's profit margin when sourcing products from offshore plants (see, e.g., \citeauthor{lu2009multimarket}, \citeyear{lu2009multimarket}).

Empirical evidence suggests companies pass tariffs to consumers by increasing prices \citep{amiti2019impact,Hinz2026Tariffs}.
Therefore, assuming fixed prices in the presence of tariffs may be limiting.
\cite{dong2010global} consider a capacity investment and distribution problem for a firm selling a single product in two markets, facing trade costs for transshipment and uncertainties in market size and exchange rate.
Once the firm sets production levels, responsive pricing (as a decreasing function of the supply quantity and market size) is used to clear the market.
This decision sequence implies that market size uncertainty is independent of the company's price choices.
Accounting for market competition, several studies consider stylized examples in which Cournot competition determines market-clearing prices (see, e.g., \citeauthor{dong2020impact}, \citeyear{dong2020impact}; \citeauthor{wei2025optimizing}, \citeyear{wei2025optimizing}).
These ex-post pricing approaches are not suitable for our setting because they require explicit modeling of downstream competition and pricing mechanisms at the supply network investment stage, which relies on strong structural assumptions and substantially increases modeling and computational complexity.
Moreover, it is not clear how the stylized settings in the literature can be transferred to a generic supply network design problem.

The existing literature primarily captures tariffs via changes in unit costs and employs different pricing mechanisms to reflect a firm's decision to pass them on to consumers.
In contrast, we avoid explicit pricing decisions at the supply network design level, and instead capture tariff exposure through demand shifts.
The demand distribution may shift unfavorably (e.g., by decreasing the mean value and increasing the level of uncertainty) when products are sourced from offshore plants subject to tariffs.
The change in demand distribution reflects the increase in market prices due to tariffs, while allowing us to maintain the profit margin of a product at a given plant ($q_{i,j}$) constant regardless of the sourcing location. 
This modeling approach provides a supply-network-level representation of tariff risks without requiring explicit pricing or modeling market competition.

\subsection{Conceptual Background: Endogenous Uncertainty}\label{sec:literature_endogenous}

Decisions may influence uncertainty in two ways \citep{goel2006class}: by altering the underlying probability distribution or by determining when uncertainty resolves (see \citeauthor{apap2017models}, \citeyear{apap2017models}).
Pertaining to our problem setting, we focus on the former.

Decisions may affect the probability distributions in different ways. 
When the set of scenarios is fixed (i.e., the support of a discrete distribution), decisions may alter their probabilities either in the objective function (see, e.g., \citeauthor{peeta2010pre}, \citeyear{peeta2010pre}; \citeauthor{viswanath2004investing}, \citeyear{viswanath2004investing}; \citeauthor{galiullina2024demand}, \citeyear{galiullina2024demand}) or via modeling probabilities as decision variables in the model (see, e.g., \citeauthor{laumanns2014distribution}, \citeyear{laumanns2014distribution}). 
These approaches are suitable when the uncertainty is discrete and first-stage decisions make some scenarios more probable.
\cite{hellemo2018decision} further model scenario probabilities as affine functions of first-stage decisions.
\cite{dupacova_optimization_2006}, \cite{JonW}, \cite{hewitt2024production}, \cite{pantuso2025solutionstochasticfacilitylocation}, and \cite{pantuso2025shaped} illustrate a class of stochastic programs where the distribution depends on the decisions in a piecewise-constant manner. That is, there exist finitely many potential distributions that may materialize, and different subregions of the feasible space induce different distributions. 
This type of problem is also known as \emph{selection of distribution}.
Overall, the resulting mathematical programming formulation depends strongly on the relationship between decisions and uncertainty, yielding a heterogeneous class of problems.

In some cases, stochastic programs with endogenous uncertainty can be transformed into traditional stochastic programming problems through a change of measure (see, e.g., \citeauthor{VarW88}, \citeyear{VarW88}; \citeauthor{pfl96}, \citeyear{pfl96}; \citeauthor{rubinstein1993discrete}, \citeyear{rubinstein1993discrete}; \citeauthor{dupacova_optimization_2006}, \citeyear{dupacova_optimization_2006}).
However, desirable properties of the objective function are typically lost in this transition and scalability is severely compromised (see, e.g., the discussion in \cite{VarW88}, \cite{pfl96}, \cite{dupacova_optimization_2006}, and examples in \cite{bazotte_solving_2024}).

Existing solution methods therefore apply only to specific classes of stochastic programs with endogenous uncertainty.
We build upon the extension of the L-shaped method proposed by \cite{pantuso2025shaped} for selection of distribution problems.
The method extends the classical L-shaped method of \cite{van1969shaped} by deriving \textit{distribution-specific} feasibility and optimality cuts to be employed in the decomposition scheme and proposes valid inequalities to speed up convergence.
We extend this method to solve the proposed network flexibility design problem.

%% file: model.tex
\section{Mathematical Model}\label{sec:model}

We proceed now to formally define the flexibility design problem under endogenous uncertainty as a two-stage stochastic program.
Let $I$ be the set of plants and $J$ be the set of products.
Furthermore, let $s_{i,j}$ denote the fixed investment cost for enabling plant $i$ to produce product $j$; $q_{i,j}$ and $r_{i,j}$ denote the profit and processing time per unit of product $j$ produced at plant $i$, respectively.
We define $y \in \{0,1\}^{I \times J}$ as the first-stage design variable, where $y_{i,j} = 1$ if plant $i$ produces product $j$ and $0$ otherwise.
In other words, $y_{i,j}$ indicates whether edge $(i,j)$ is included in the flexibility design $E \subseteq I \times J$.
Recall that each flexibility design $E$ determines a distribution $\mathbb{P}_E$ of supply and demand.
Consequently, to each vector $y$ we associate the corresponding distribution and denote it by $\mathbb{P}_y$.
In what follows, we use $\Xi$ to denote the random vector of supply and demand and $\xi$ to denote a specific realization of it.
More specifically, the total production capacity of plant $i$ is the random variable $\Xi_{i}^{c}$, and the total market demand for product $j$ is the random variable $\Xi_{j}^{d}$.
The generic two-stage stochastic program for this problem is
\begin{align}\label{eq:generic_model}
    \max_{y \in \{0,1\}^{I\times J}} \left\{\mathbb{E}_{\Xi \sim \mathbb{P}_y} \Big[ f(y, \Xi) \Big] - \sum_{i\in I} \sum_{j \in J} s_{i,j} \; y_{i,j} \right\},
\end{align}
maximizing the expected second-stage profit minus the investment costs incurred by the first-stage design.
Given a design $y$ and a realization $\xi$, the second-stage problem is defined as
\begin{equation}\label{eq:max_flow}
    \begin{aligned}
        f(y, \xi) \coloneqq  \max_{x}\ &\sum_{i\in I} \sum_{j \in J} q_{i,j} \; x_{i,j}\\
                            \text{s.t.}\  &\sum_{j \in J} r_{i,j} \; x_{i,j} \le \xi_{i}^{c} &&\forall \ i \in I, \\
                        & \sum_{i \in I} x_{i,j} \le \xi_{j}^{d} &&\forall \ j \in J, \\
                        & \; 0 \le x_{i,j} \le M_{i,j}^\xi \; y_{i,j} &&\forall \ i \in I, \, j \in J
    \end{aligned}
\end{equation}
for $M_{i,j}^\xi = \min \{\nicefrac{\xi_i^c}{r_{i,j}}, \xi_j^d\}$.
Variable $x_{i, j}$ determines the production quantity of product $j$ at plant $i$, which is only possible if $y_{i,j} = 1$ (the domain constraints).
The objective function maximizes the profit, the first set of constraints ensures that the plant capacities are respected, and the second set of constraints bounds total production volumes of each product by the market demand.
The set of all possible distributions in the second-stage problem is denoted by 
$\mathcal{P} \coloneqq \{ \mathbb{P}_y \mid y \in \{0,1\}^{I \times J} \}$.

We first reformulate Problem~\eqref{eq:generic_model} into a more explicit form that will serve as the basis for the mixed-integer linear programming reformulations presented next.
As common in the stochastic programming literature, we approximate the expected second-stage profit through sample average approximation.
To this end, for each $p \in \mathcal{P}$, let $S_p$ be a set of scenarios independently sampled from distribution $p$.
We approximate $\mathbb{E}_{\Xi \sim \mathbb{P}_y} \Big[ f(y, \Xi) \Big]$ by the sample average $F(y,p) \coloneq \nicefrac{1}{|S_p|} \sum_{\xi \in S_p} f(y, \xi)$ where $p=\mathbb{P}_y$.
Next, we replace Problem~\eqref{eq:generic_model} by 
$\max_{y \in \{0,1\}^{I\times J}} \left\{F(y,p) - \sum_{i\in I} \sum_{j \in J} s_{i,j} \; y_{i,j}: p = \mathbb{P}_y \right\}$.
To remove the dependence between $y$ and $p$ from the objective function, we introduce an auxiliary variable $\mu$ that represents the expected second-stage profit:  
\begin{align}
    \max_{y \in \{0,1\}^{I \times J}, \mu} \ &\mu - \sum_{i\in I} \sum_{j \in J} s_{i,j} \; y_{i,j} \nonumber \\ 
    \text{s.t.} \ &\mu \le F(y,p) \ \vee \ \mathbb{P}_y \ne p \qquad \forall \ p \in \mathcal{P}, \label{eq:p_cuts}
\end{align}

Letting $U$ be a sufficiently large number, we may obtain an explicit formulation from the disjunctive constraints in~\eqref{eq:p_cuts} using the inequalities  
\begin{equation}\label{eq:no_good_cut}
    \mu \le \Big[\sum_{(i,j)\in I \times J: \hat{y}_{i,j}=1} \big(1 - y_{i,j} \big) + \sum_{(i,j)\in I \times J:\hat{y}_{i,j}=0} y_{i,j}\Big]\cdot U + F(y, \mathbb{P}_{\hat{y}})
    \quad \forall \hat{y} \in \{0,1\}^{I \times J}.
\end{equation}
This formulation consists of an inequality for every design choice $\hat{y}$, resulting in $2^{|I| \cdot |J|}$ many inequalities, which is highly impractical.
However, we next show how to exploit specific structures of endogenous uncertainty to aggregate many of these inequalities into a more compact form.

\subsection{Reformulating Endogenous Supply Uncertainty}\label{sec:endo_supply_formulation}

For ease of exposition, we momentarily treat demand as exogenous.
In the case of endogenous supply uncertainty, we motivated that the supply distribution in the second-stage problem only depends on the number of products assigned to each plant in the flexibility design.
Hence, the distribution resulting from a design $y$ is determined by the degree of each plant node $i \in I$ in $y$.
For a degree vector $d \in \mathbb{Z}_{\ge 0}^I$, let $\mathbb{P}_d$ denote the supply distribution induced when each plant $i$ is assigned to exactly $d_i$ products.
For each $i \in I$ and $k \in \{0, \dots, |J|\}$, let us introduce a binary variable $w_{i,k} \in \{0,1\}$ indicating whether plant $i$ is assigned to exactly $k$ products.
We can directly link the design $y$ to $w$ through the following constraints
\begin{subequations}\label{eq:endo_supply_reformulation}
    \begin{align}
        &\sum_{j \in J} y_{i,j} = \sum_{k=0}^{|J|} k \; w_{i,k} &&\forall i \in I, \label{eq:endo_supply_a} \\
        &\sum_{k=0}^{|J|} w_{i,k} = 1 &&\forall i \in I. \label{eq:endo_supply_b}
    \end{align}
\end{subequations}
Using the $w$ variables, we can replace the constraints in~\eqref{eq:no_good_cut} by 
\begin{equation}\label{eq:endo_supply_no_good_cut}
    \mu \le \left[ 
        \sum_{i \in I} (1 - w_{i,d_i})
    \right] \cdot U + F(y, \mathbb{P}_{d}) \quad \forall d \in \{0, \dots, |J|\}^{I}.
\end{equation}
Notice that the number of inequalities in~\eqref{eq:endo_supply_no_good_cut} is $(|J| + 1)^{|I|}$, which is significantly smaller than the number of inequalities in~\eqref{eq:no_good_cut}.  

\subsection{Reformulation of Endogenous Demand Uncertainty}\label{sec:endo_demand_formulation}

In analogy to the previous section, suppose now that supply is exogenous.
Recall that in the case of endogenous demand uncertainty, we consider a partition $\mathcal{Z}$ of the plants $I$ into zones.
Note that a design $y$ induces an assignment $h : J \to 2^{\mathcal{Z}}$ that maps each product $j \in J$ to the subset of zones from which it is sourced.
The demand distribution is determined by this assignment $h$, and hence we denote it by $\mathbb{P}_h$.
For each $j \in J$ and $z \in \mathcal{Z}$, we introduce a binary variable $u_{j,z} \in \{0,1\}$ indicating whether product $j$ is assigned to a plant in zone $z$.
The design $y$ can be linked to $u$ through the following constraints:
\begin{subequations}\label{eq:link_y_u}
    \begin{align}
        &y_{i,j} \le u_{j,z} &&\forall j \in J, \, z \in \mathcal{Z}, i \in z, \label{eq:endo_demand_a}\\
        &u_{j,z} \le \sum_{i \in z} y_{i,j}  &&\forall z \in \mathcal{Z}, \, j \in J. \label{eq:endo_demand_b}
    \end{align}
\end{subequations}
Similar to the constraints in~\eqref{eq:endo_supply_no_good_cut}, we may replace the constraints in~\eqref{eq:no_good_cut} by
\begin{equation}\label{eq:endo_demand_no_good_cut}
    \mu \le \left[\sum_{j \in J} \left(\sum  \nolimits_{z \in h(j)} ( 1-u_{j,z}) + \sum \nolimits_{z \in \mathcal{Z} \setminus h(j)} u_{j,z} \right) \right] \cdot U + F(y, \mathbb{P}_h) \quad \forall h:J \to 2^{\mathcal{Z}}.
\end{equation}
The number of inequalities in~\eqref{eq:endo_demand_no_good_cut} is $2^{|J|\cdot|\mathcal{Z}|}$.
Assuming that there are fewer zones than plants, this is again smaller than the number of inequalities in~\eqref{eq:no_good_cut}. 

\subsection{Combining Endogenous Supply and Demand}
When both supply and demand are endogenous, the distribution $\mathbb{P}_y$ is determined by both the degree vector of plants and the assignment of products to zones.
Hence, for a degree vector $d \in \mathbb{Z}_{\ge 0}^I$ and an assignment $h : J \to 2^{\mathcal{Z}}$, we let $\mathbb{P}_{d,h}$ denote the distribution that is induced when each plant $i$ is assigned to exactly $d_i$ products and each product $j$ is assigned to zones in $h(j)$.
We can combine the two previous reformulations in a straightforward way to finally obtain an aggregation of the constraints in~\eqref{eq:no_good_cut}.
The resulting formulation is  
\begin{subequations}\label{eq:model}
    \begin{align}
        \max_{y, w, u, \mu} \ &\mu - \sum_{i\in I} \sum_{j \in J} s_{i,j} \; y_{i,j} \label{eq:combined_objective} \\ 
        \text{s.t.} \ 
        &\text{Constraints~\eqref{eq:endo_supply_reformulation}, \eqref{eq:link_y_u}},\nonumber \\
        &\mu \le \left[ 
            \sum_{i \in I} (1 - w_{i,d_i}) + \sum_{j \in J} \left(\sum  \nolimits_{z \in h(j)} ( 1-u_{j,z}) + \sum \nolimits_{z \in \mathcal{Z} \setminus h(j)} u_{j,z} \right) 
        \right] \cdot U + F(y, \mathbb{P}_{d,h}) \nonumber \\
        &\qquad \forall d \in \{0,\dots,|J|\}^{I}, \;
        h : J \to 2^{\mathcal{Z}}, \label{eq:combined_nogood_cuts} \\[0.5em]
        & y \in \{0,1\}^{I\times J},\;
        w \in \{0,1\}^{I\times \{0,\dots,|J|\}},\;
        u \in \{0,1\}^{J\times \mathcal{Z}}.\label{eq:domain}
    \end{align}
\end{subequations}

\subsection{Distribution-specific Constraints}
In the final step, we dualize the term $F(y, \mathbb{P}_{d,h})$ in Constraints~\eqref{eq:combined_nogood_cuts}.
Recall that $F(y,p) = \nicefrac{1}{|S_p|} \sum_{\xi \in S_p} f(y, \xi)$.
Each second-stage problem $f(y, \xi)$ is a linear program (which is always feasible and finite), and hence we may consider its dual formulation
\begin{equation}\label{eq:dual}
        f(y, \xi) = \min_{\alpha, \beta, \rho} \ \left\{ \sum_{i \in I} \xi_{i}^{c} \; \alpha_{i} + \sum_{j \in J} \xi_{j}^{d} \; \beta_j + \sum_{(i,j) \in I \times J} M_{i,j}^\xi \; y_{i,j} \; \rho_{i,j} \ | \ (\alpha, \beta, \rho) \in \mathcal{T} \right\},
\end{equation}
where $\mathcal{T} \coloneq \left\{(\alpha, \beta, \rho) \in \mathbb{R}^I_{\ge 0} \times \mathbb{R}^J_{\ge 0} \times \mathbb{R}^{I \times J}_{\ge 0}:  r_{i,j} \; \alpha_i + \beta_j +  \rho_{i,j} \ge q_{i,j} \quad \forall i \in I, j \in J \right\}$.
Note that $\mathcal{T}$ is a non-empty polyhedron and independent of $\xi$.
Denoting the vertices of $\mathcal{T}$ by $\vertices(\mathcal{T})$, we can restrict the minimization in Problem~\eqref{eq:dual} to $(\alpha,\beta,\rho) \in \vertices(\mathcal{T})$.
Setting $S_{d,h} \coloneq S_{\mathbb{P}_{d,h}}$, we can  reformulate Constraints~\eqref{eq:combined_nogood_cuts} as
\begin{align}
    \mu \le \ &\left[ 
        \sum_{i \in I} (1 - w_{i,d_i}) + \sum_{j \in J} \left(\sum  \nolimits_{z \in h(j)} ( 1-u_{j,z}) + \sum \nolimits_{z \in \mathcal{Z} \setminus h(j)} u_{j,z} \right) 
    \right] \cdot U \nonumber \\
    &+ \frac{1}{|S_{d,h}|} \sum_{\xi \in S_{d,h}} \left( \sum_{i \in I} \xi_{i}^{c} \; \alpha_{i}^{\xi} + \sum_{j \in J} \xi_{j}^{d} \; \beta_j^{\xi} + \sum_{(i,j) \in I \times J} M_{i,j}^\xi  \; \rho_{i,j}^{\xi} \; y_{i,j} \right) \nonumber \\
    &\quad \forall d \in \{0,\dots,|J|\}^{I}, \;
    h : J \to 2^{\mathcal{Z}}, \;
    (\alpha, \beta, \rho) \in \vertices(\mathcal{T})^{S_{d,h}}. \label{eq:final_benders_cuts}\tag{P-Constr.}
\end{align}
We have now transformed the original Problem~\eqref{eq:generic_model} with endogenous uncertainty into a mixed-integer linear program.
Note that the number of constraints in~\eqref{eq:final_benders_cuts} is very large.
We refer to them as \emph{distribution-specific constraints} and tackle them in a lazy fashion as explained in Section~\ref{sec:solution_approach}.

%% file: strengthening.tex
\section{Formulation Strengthening}\label{sec:strengthening}
The Constraints~\eqref{eq:final_benders_cuts} in our formulation are distribution-specific, i.e., they only provide a valid upper bound on the expected second-stage profit for all first-stage designs $y$ that induce the \emph{specific} distribution $\mathbb{P}_{d,h}$.
This implies that every distribution $\mathbb{P}_{d,h}$ whose corresponding constraints have not yet been added to the formulation will allow the expected second-stage profit $\mu$ to be $+\infty$.
Therefore, the current formulation does not provide meaningful upper bounds until \emph{all} distributions have been \emph{visited} at least once.
In other words, any iterative approach based on the above formulation must visit all distributions at least once to guarantee convergence to optimality.

In what follows, we describe strengthened formulations that yield non-trivial upper bounds that are valid for all distributions or subsets of them.
We begin by adapting state-of-the-art inequalities from the literature and afterwards present our new strengthened formulations.

\subsection{State-of-the-art Cuts}\label{sec:sota_cuts}
 
Since our second-stage problem $f(y, \xi)$ is concave in $\xi$ for fixed $y$, the objective value of the second-stage problem can be used to derive a single inequality imposing a valid upper bound on $\mu$ for any design $y$ independent of distribution $p \in \mathcal{P}$.
This observation has been made in \cite[Proposition~4]{pantuso2025solutionstochasticfacilitylocation}:

\begin{proposition}\label{prop:jensen}
    Every design $y \in \{0,1\}^{I \times J}$ satisfies
    \[
        \mathbb{E}_{\Xi \sim \mathbb{P}_y}[f(y, \Xi)]
        \le \sum_{i\in I} \sum_{j \in J} q_{i,j} \cdot \max_{p \in \mathcal{P}} \ \min \left(
        \frac{\mathbb{E}_{\Xi \sim p} \left[\Xi_i^{c}\right]}{r_{i,j}}, \mathbb{E}_{\Xi \sim p} \left[\Xi_j^{d}\right]
        \right) \cdot y_{i,j}.
    \]
\end{proposition}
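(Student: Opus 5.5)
The bound follows from concavity of the second-stage value in the realization (Jensen's inequality), combined with a simple edge-wise upper bound on the second-stage LP. Fix a design $y$ and write $p^\ast = \mathbb{P}_y$.

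\textbf{Step 1 (concavity).} For fixed $y$, the map $\xi \mapsto f(y,\xi)$ is concave. The realization enters Problem~\eqref{eq:max_flow} only through the right-hand sides $\xi_i^c$ and $\xi_j^d$. Any convex combination of feasible solutions for $\xi^1$ and $\xi^2$ is therefore feasible for the corresponding combination of $\xi^1$ and $\xi^2$.

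One caveat concerns the domain bounds $M^\xi_{i,j} y_{i,j}$. Since $M^\xi_{i,j}=\min\{\xi^c_i/r_{i,j},\xi^d_j\}$ is itself concave, these bounds are also respected: the combination of the bounds is at most the bound of the combination. Alternatively, one notes that the bounds are implied by the capacity and demand constraints together with $x\ge 0$, so they can simply be replaced by $x_{i,j}=0$ whenever $y_{i,j}=0$.

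Jensen's inequality then gives
\[
\mathbb{E}_{\Xi\sim p^\ast}[f(y,\Xi)] \le f\bigl(y,\mathbb{E}_{\Xi\sim p^\ast}[\Xi]\bigr).
\]
This step needs integrability of $\Xi$, which I take as implicit; it is trivial for the sample-average distributions used in the paper.

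\textbf{Step 2 (edge-wise bound).} For an arbitrary realization $\bar\xi$, bound $f(y,\bar\xi)$ edge by edge. Let $x$ be any feasible solution. Each $x_{i,j}$ is nonnegative and vanishes unless $y_{i,j}=1$. The capacity constraint of plant $i$ alone gives $r_{i,j}x_{i,j}\le \bar\xi^c_i$, using nonnegativity of the other terms and $r_{i,j}>0$. The demand constraint alone gives $x_{i,j}\le\bar\xi^d_j$. Hence
\[
x_{i,j}\le \min\bigl(\bar\xi^c_i/r_{i,j},\,\bar\xi^d_j\bigr)\,y_{i,j}.
\]
If $q_{i,j}\ge 0$, multiplying by $q_{i,j}$ and summing yields
\[
f(y,\bar\xi)\le \sum_{i,j} q_{i,j}\min\bigl(\bar\xi^c_i/r_{i,j},\,\bar\xi^d_j\bigr)\,y_{i,j}.
\]
Negative profits need more care. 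Such edges can be dropped: an optimal $x$ may set those variables to $0$, so the inequality holds with $q_{i,j}$ replaced by $\max(q_{i,j},0)$. I would assume nonnegative profits, as is standard in this setting, and remark on this point.

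\textbf{Step 3 (maximize over distributions).} Apply Step 2 with $\bar\xi=\mathbb{E}_{p^\ast}[\Xi]$. Each term $\min\bigl(\mathbb{E}_{p^\ast}[\Xi^c_i]/r_{i,j},\,\mathbb{E}_{p^\ast}[\Xi^d_j]\bigr)$ is at most its maximum over $p\in\mathcal{P}$, because $p^\ast\in\mathcal{P}$ by definition of $\mathcal{P}$. Combined with $q_{i,j}y_{i,j}\ge 0$, this gives the claimed inequality.

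The only real subtlety is Step 1, namely verifying concavity in the presence of the $\xi$-dependent big-$M$ bounds. As noted, it is handled either by the concavity of the minimum of linear functions or by observing that those bounds are redundant. The sign of $q_{i,j}$ in Step 3 is a minor secondary point.
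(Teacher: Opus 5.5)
Your proof is correct and follows the paper's argument. The paper also uses Jensen's inequality via concavity of $f(y,\cdot)$, then the edge-wise bound $x_{i,j}\le \min(\xi_i^c/r_{i,j},\xi_j^d)\,y_{i,j}$ evaluated at the mean (it reads this directly off the constraints $x_{i,j}\le M^\xi_{i,j}y_{i,j}$ rather than from the capacity and demand rows), and finally $\mathbb{P}_y\in\mathcal{P}$. Your additional checks---concavity in the presence of the $\xi$-dependent bounds $M^\xi_{i,j}$, and the need for $q_{i,j}\ge 0$, which the paper uses without stating it---are valid and make explicit what the paper leaves implicit.
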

\begin{proof}{Proof.}
    Note that Problem~\eqref{eq:max_flow} includes the constraints $x_{i,j} \le M_{i,j}^\xi \; y_{i,j} = \min \left( \frac{\xi_i^{c}}{r_{i,j}}, \xi_j^{d} \right) \cdot y_{i,j}$.
    Hence, for fixed $y$ and $\xi$, we have
    $
        f(y,\xi) \le \sum_{i\in I} \sum_{j \in J} q_{i,j} \cdot \min (
        \frac{\xi_i^{c}}{r_{i,j}}, \xi_j^{d}) \cdot y_{i,j}.
    $
    Thus, by Jensen's inequality, we conclude
    \begin{align*}
        \mathbb{E}_{\Xi \sim \mathbb{P}_y}[f(y, \Xi)]
        \le f(y, \mathbb{E}_{\Xi \sim \mathbb{P}_y}[\Xi]) &
        \le \sum_{i\in I} \sum_{j \in J} q_{i,j} \cdot \ \min \left(
        \frac{\mathbb{E}_{\Xi \sim \mathbb{P}_y} \left[\Xi_i^{c}\right]}{r_{i,j}}, \mathbb{E}_{\Xi \sim \mathbb{P}_y} \left[\Xi_j^{d}\right] \right) \cdot y_{i,j} \\
        & \le \sum_{i\in I} \sum_{j \in J} \ q_{i,j} \cdot \max_{p \in \mathcal{P}} \ \min \left(
        \frac{\mathbb{E}_{\Xi \sim p} \left[\Xi_i^{c}\right]}{r_{i,j}}, \mathbb{E}_{\Xi \sim p} \left[\Xi_j^{d}\right]
        \right) \cdot y_{i,j}. \qedhere
    \end{align*}
\end{proof}
By the above statement, we may add the inequality
    \begin{equation}\label{eq:jensen}\tag{JC}
    \mu \le \sum_{i\in I} \sum_{j \in J} q_{i,j} \cdot \max_{p \in \mathcal{P}} \ \min \left(
        \frac{\mathbb{E}_{\Xi \sim p} \left[\Xi_i^{c}\right]}{r_{i,j}}, \mathbb{E}_{\Xi \sim p} \left[\Xi_j^{d}\right]
        \right) \cdot y_{i,j}
\end{equation}
to our model, which we call the \emph{Jensen constraint}.
We may further use Proposition~\ref{prop:jensen} to assign a reasonable value to parameter $U$ in Problem~\eqref{eq:model}.
\begin{corollary}\label{cor:U}
    For Problem~\eqref{eq:model}, a valid choice of parameter $U$ is
    \[
        U = \sum_{i\in I} \sum_{j \in J} q_{i,j} \cdot \max_{p \in \mathcal{P}} \ \min \left(
        \frac{\mathbb{E}_{\Xi \sim p} \left[\Xi_i^{c}\right]}{r_{i,j}}, \mathbb{E}_{\Xi \sim p} \left[\Xi_j^{d}\right]
        \right).
    \]
\end{corollary}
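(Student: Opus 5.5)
We need to show that with this $U$, each big-$M$ constraint in~\eqref{eq:combined_nogood_cuts} is redundant whenever the distribution index $(d,h)$ does not match the one induced by the current design $y$. The binding constraint is the matching one, which bounds $\mu$ by the correct sample-average value. The check therefore reduces to bounding the difference between $\mu$ and $F(y,\mathbb{P}_{d,h})$.

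First, take any feasible $(y,w,u)$. By~\eqref{eq:endo_supply_reformulation} and~\eqref{eq:link_y_u}, $w$ and $u$ encode the degree vector and zone assignment of $y$; call them $(d^*,h^*)$. For the matching constraint the bracket vanishes, so that constraint reads $\mu \le F(y,\mathbb{P}_{d^*,h^*})$. This is exactly the intended bound, and it is at most $\sum_{i,j} q_{i,j} M_{i,j}^\xi y_{i,j}$ averaged over scenarios.

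For any non-matching $(d,h)$, some $w_{i,d_i}=0$, or some $u_{j,z}$ disagrees with $h$. Hence the bracket is at least $1$ and the right-hand side is at least $U + F(y,\mathbb{P}_{d,h}) \ge U$, since $f\ge 0$ (take $x=0$). It therefore suffices that, at any optimal solution, $\mu \le U$. This holds because $\mu \le F(y,\mathbb{P}_{d^*,h^*})$, which approximates $\mathbb{E}_{\Xi\sim\mathbb{P}_y}[f(y,\Xi)]$.

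Proposition~\ref{prop:jensen} gives the bound
\[
\mathbb{E}_{\Xi\sim\mathbb{P}_y}[f(y,\Xi)] \le \sum_{i,j} q_{i,j}\max_{p}\min\Big(\tfrac{\mathbb{E}_p[\Xi_i^c]}{r_{i,j}},\mathbb{E}_p[\Xi_j^d]\Big)\, y_{i,j} \le U,
\]
using $y_{i,j}\le 1$ and $q_{i,j}\ge 0$. In the sample-average setting, the same Jensen argument applies with expectations read as empirical means over $S_p$: $F(y,p)$ is the expectation of $f$ under the empirical distribution, and concavity of $f(y,\cdot)$ (plus the pointwise bound $f(y,\xi)\le\sum q_{i,j}\min(\xi_i^c/r_{i,j},\xi_j^d)y_{i,j}$) gives the same estimate. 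So the non-matching constraints are implied, and the feasible set in $(y,\mu)$ coincides with that of the disjunctive model~\eqref{eq:p_cuts}.

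The main obstacle is this identification of $\mathbb{E}_p$ with the empirical mean. If Proposition~\ref{prop:jensen} is taken with true expectations while the model uses $F$, the bound is only approximate. I would resolve this by interpreting $\mathcal{P}$'s means as sample means, as the model itself does. I would also note the implicit assumption $q_{i,j}\ge 0$, which is needed to drop the factor $y_{i,j}$.
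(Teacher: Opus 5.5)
Your argument is correct and matches what the paper intends: the paper gives no separate proof and presents the corollary as an immediate consequence of Proposition~\ref{prop:jensen}, which bounds the second-stage value of every design by $U$, so each non-matching constraint (bracket at least $1$, nonnegative $F$-term) is implied by the matching one. The sample-average caveat you raise is real and the paper glosses over it, since it applies Proposition~\ref{prop:jensen} (stated for true expectations) directly to the model built on $F$; with $U$ computed from true means the bound can in principle fail for small samples, so reading the expectations as empirical means over $S_p$, as you propose, is what makes the statement exact.
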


Furthermore, our second-stage problem $f(y, \xi)$ is non-decreasing in $\xi$ for fixed $y$.
This allows us to derive a valid upper bound for all distributions using the so-called dominant scenario \citep{pantuso2025shaped}.
Here, the \emph{dominant scenario} is the vector $\hat{\xi} = \max_{p \in \mathcal{P}} \mathbb{E}_{\Xi \sim p}[\Xi]$, i.e., the component-wise maximum expected supply and demand over all distributions.
Note that it satisfies
\begin{equation}\label{eq:gsc1}
    f(y, \mathbb{E}_{\Xi \sim \mathbb{P}_y}[\Xi]) \le f(y, \hat{\xi})
\end{equation}
for every design $y$, and that we do not require $\hat{\xi}$ to be among the sampled scenarios.
As observed in \cite[Proposition 4]{pantuso2025shaped}, this allows us to derive another valid inequality.

\begin{proposition}\label{prop:dominant_scenario}
    Let $\hat{\xi}$ be the dominant scenario and $(\alpha, \beta, \rho) \in \vertices(\mathcal{T})$.
    Every design $y \in \{0,1\}^{I \times J}$ satisfies
    \[
        \mathbb{E}_{\Xi \sim \mathbb{P}_{y}} [f(y, \Xi)] \le \sum_{i \in I} \hat{\xi}_{i}^{c} \; \alpha_{i} + \sum_{j \in J} \hat{\xi}_{j}^{d} \; \beta_{j} + \sum_{(i,j) \in I \times J} M_{i,j}^{\hat{\xi}} \; \rho_{i,j} \; y_{i,j}.
    \]
\end{proposition}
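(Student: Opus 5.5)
The bound follows from a chain of three inequalities. Jensen's inequality handles the expectation, monotonicity of the second-stage problem in $\xi$ lets us pass to the dominant scenario $\hat{\xi}$, and weak LP duality evaluates the dual objective of Problem~\eqref{eq:dual} at the given feasible vertex. The first two steps are exactly the content of the proof of Proposition~\ref{prop:jensen} together with inequality~\eqref{eq:gsc1}; the last step is where the fixed, arbitrary vertex $(\alpha,\beta,\rho)$ enters.

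\textbf{Concavity and Jensen.} For fixed $y$, the map $\xi \mapsto f(y,\xi)$ is concave. The caveat is that the bounds $M_{i,j}^\xi y_{i,j}$ are themselves concave in $\xi$, since they are a minimum of linear functions. To see concavity, take a convex combination of two scenarios and the corresponding convex combination of optimal solutions. It remains feasible because the capacity and demand constraints are linear in $\xi$, and because $\min$ is concave, so $\lambda M^{\xi} + (1-\lambda)M^{\xi'} \le M^{\lambda\xi+(1-\lambda)\xi'}$. Jensen's inequality then gives $\mathbb{E}_{\Xi\sim\mathbb{P}_y}[f(y,\Xi)] \le f(y,\mathbb{E}_{\Xi\sim\mathbb{P}_y}[\Xi])$, as already used in Proposition~\ref{prop:jensen}.

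\textbf{Monotonicity.} Next, $f(y,\cdot)$ is non-decreasing. Enlarging any $\xi_i^c$ or $\xi_j^d$ only relaxes the constraints, including the $M$ bounds, since $\min$ is monotone. Since $\mathbb{E}_{\Xi\sim\mathbb{P}_y}[\Xi] \le \hat{\xi}$ componentwise by definition of $\hat{\xi}$ (because $\mathbb{P}_y \in \mathcal{P}$), we obtain \eqref{eq:gsc1}, namely $f(y,\mathbb{E}_{\Xi\sim\mathbb{P}_y}[\Xi]) \le f(y,\hat{\xi})$.

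\textbf{Weak duality.} Write $f(y,\hat{\xi})$ in the dual form~\eqref{eq:dual}. This is legitimate because the dual representation holds for any nonnegative right-hand side vector, not only sampled scenarios: the LP is feasible ($x=0$) and bounded. The feasible set $\mathcal{T}$ does not depend on the scenario, so the given $(\alpha,\beta,\rho)\in\vertices(\mathcal{T})\subseteq\mathcal{T}$ is dual feasible. Hence the minimum is at most the dual objective evaluated at this point, which is exactly the right-hand side of the claim with $\xi=\hat{\xi}$.

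\textbf{Main obstacle.} The only delicate point is the concavity step. The bound $M^\xi_{i,j}$ depends on $\xi$, so concavity is not the textbook statement for right-hand-side perturbations and needs the short argument above. Alternatively, one can observe that the constraint $x_{i,j}\le M^\xi_{i,j}y_{i,j}$ is implied by the capacity and demand constraints when $y_{i,j}=1$ and $x\ge 0$, so it may be replaced by $x_{i,j}\le \infty\cdot y_{i,j}$, that is, $x_{i,j}=0$ when $y_{i,j}=0$. Then $\xi$ appears only in the right-hand sides, and standard LP parametric concavity and monotonicity apply directly. Everything else is immediate.
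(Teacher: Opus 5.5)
Your proposal is correct and follows the same three-step chain as the paper: Jensen's inequality, then \eqref{eq:gsc1} via monotonicity of $f(y,\cdot)$, then weak duality at the given dual-feasible vertex of $\mathcal{T}$. You also verify concavity and monotonicity even though the bounds $M^{\xi}_{i,j}$ depend on $\xi$, which fills in facts the paper only asserts.
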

\begin{proof}{Proof.}
    We have
    \begin{equation*}
        \mathbb{E}_{\Xi \sim \mathbb{P}_{y}} [f(y, \Xi)] \le f(y, \mathbb{E}_{\Xi \sim \mathbb{P}_y}[\Xi]) \le f(y, \hat{\xi}) \;
        \le \sum_{i \in I} \hat{\xi}_{i}^{c} \; \alpha_{i} + \sum_{j \in J} \hat{\xi}_{j}^{d} \; \beta_{j} + \sum_{(i,j) \in I \times J} M_{i,j}^{\hat{\xi}} \; \rho_{i,j} \; y_{i,j},
    \end{equation*}
    where the second inequality follows from~\eqref{eq:gsc1} and the third inequality follows from Problem~\eqref{eq:dual}.\qedhere
\end{proof}
As a result, the \emph{dominant-scenario cut}
\begin{equation}\label{eq:dominant_scenario}\tag{DC}
    \mu \le \sum_{i \in I} \hat{\xi}_{i}^{c} \; \alpha_{i} + \sum_{j \in J} \hat{\xi}_{j}^{d} \; \beta_{j} + \sum_{(i,j) \in I \times J} M_{i,j}^{\hat{\xi}} \; \rho_{i,j} \; y_{i,j}
\end{equation}
is valid for our model.

\subsection{Dominant Flow Constraints}\label{sec:our_cuts}

Since the dominant scenario $\hat{\xi}$ satisfies $\mathbb{E}_{\Xi \sim \mathbb{P}_{y}} [f(y, \Xi)] \le f(y, \hat{\xi})$ for all designs $y$, the inequality $\mu \le f(y, \hat{\xi})$ is valid for Problem~\eqref{eq:model}.
We propose to model this inequality by introducing second-stage flow variables $\hat{x}_{i,j}$ that represent flows under the dominant scenario $\hat{\xi}$ and adding the \emph{dominant flow constraints}
\begin{equation}\label{eq:dff}\tag{DFC}
    \begin{aligned}
        &\; \mu \le \sum_{i\in I} \sum_{j \in J} q_{i,j} \; \hat{x}_{i,j},\\
        &\sum_{j \in J}   r_{i,j} \; \hat{x}_{i,j} \le \hat{\xi}_{i}^{c} &&\forall \ i \in I, \\
        & \sum_{i \in I} \hat{x}_{i,j} \le \hat{\xi}_{j}^{d} &&\forall \ j \in J, \\
        & \; 0 \le \hat{x}_{i,j} \le M_{i,j}^{\hat{\xi}} \; y_{i,j} &&\forall \ i \in I, \, j \in J,
    \end{aligned}
\end{equation}
to our model.
Note that the above constraints impose a valid upper bound on $\mu$ since for every $y$, there exists a feasible flow $\hat{x}$ such that $f(y, \hat{\xi}) = \sum_{i\in I} \sum_{j \in J} q_{i,j} \; \hat{x}_{i,j}$.

\begin{proposition}\label{prop:imply}
    \eqref{eq:dff} implies \eqref{eq:dominant_scenario}.
\end{proposition}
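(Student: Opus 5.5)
The claim is about the projection of \eqref{eq:dff} onto the $(y,\mu)$ space. We read it as follows: every point $(y,\mu)$, with $y$ allowed to be fractional in $[0,1]^{I\times J}$, for which some $\hat{x}$ satisfies \eqref{eq:dff}, also satisfies \eqref{eq:dominant_scenario} for every vertex $(\alpha,\beta,\rho)\in\vertices(\mathcal{T})$. The plan is a weak-duality argument applied directly to the linear system \eqref{eq:dff}. We never need to solve an LP, and integrality of $y$ plays no role, so the implication holds even for the LP relaxation.

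Fix $(y,\mu,\hat{x})$ satisfying \eqref{eq:dff} and any $(\alpha,\beta,\rho)\in\mathcal{T}$. Multiply the capacity constraints by $\alpha_i\ge 0$, the demand constraints by $\beta_j\ge 0$, and the upper bounds $\hat{x}_{i,j}\le M^{\hat{\xi}}_{i,j}y_{i,j}$ by $\rho_{i,j}\ge 0$, then sum everything. This gives
\[
\sum_{(i,j)\in I\times J}\bigl(r_{i,j}\alpha_i+\beta_j+\rho_{i,j}\bigr)\hat{x}_{i,j}\le \sum_{i\in I}\hat{\xi}^c_i\alpha_i+\sum_{j\in J}\hat{\xi}^d_j\beta_j+\sum_{(i,j)\in I\times J}M^{\hat{\xi}}_{i,j}\rho_{i,j}y_{i,j}.
\]
Since $\hat{x}\ge 0$ and $r_{i,j}\alpha_i+\beta_j+\rho_{i,j}\ge q_{i,j}$ by the definition of $\mathcal{T}$, the left-hand side is at least $\sum_{i,j} q_{i,j}\hat{x}_{i,j}$. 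By the first line of \eqref{eq:dff}, that sum is at least $\mu$. Chaining these inequalities yields exactly \eqref{eq:dominant_scenario}.

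The only point needing care is whether ``implies'' should be read for integral $y$ or for the linear relaxation. The argument above covers both, since it uses only linear inequalities. I expect no real obstacle here; the step is merely to state this observation explicitly, noting that the argument works for all of $\mathcal{T}$, not only its vertices.
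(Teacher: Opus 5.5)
Your proof is correct and is essentially the paper's argument. The paper chains $\mu \le \sum_{i,j} q_{i,j}\hat{x}_{i,j} \le f(y,\hat{\xi}) \le \sum_{i} \hat{\xi}^c_i\alpha_i + \sum_j \hat{\xi}^d_j\beta_j + \sum_{i,j} M^{\hat{\xi}}_{i,j}\rho_{i,j}y_{i,j}$, using the definition of $f$ and the dual formulation~\eqref{eq:dual}; you instead prove the weak-duality step inline by aggregating the constraints of~\eqref{eq:dff}, which makes explicit that the implication holds for fractional $y$ and for every point of $\mathcal{T}$, not only its vertices.
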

\begin{proof}{Proof.}
    If $(\mu, y, \hat{x})$ satisfies~\eqref{eq:dff} and $(\alpha, \beta, \rho) \in \vertices(\mathcal{T})$, then we have
    \[
        \mu \le \sum_{i \in I} \sum_{j \in J} q_{i,j} \; \hat{x}_{i,j}
        \le f(y, \hat{\xi})
        \le \sum_{i \in I} \hat{\xi}_{i}^{c} \; \alpha_{i} + \sum_{j \in J} \hat{\xi}_{j}^{d} \; \beta_{j} + \sum_{(i,j) \in I \times J} M_{i,j}^{\hat{\xi}} \; \rho_{i,j} \; y_{i,j},
    \]
    where the second inequality follows from the definition of $f(y, \hat{\xi})$ and the third inequality follows from Problem~\eqref{eq:dual}.\qedhere
\end{proof}

\begin{proposition}\label{prop:imply2}
    If $\Xi^c$ and $\Xi^d$ are independent, then
    \eqref{eq:dff} implies \eqref{eq:jensen}.
\end{proposition}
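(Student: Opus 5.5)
The plan is to bound $\mu$ through the flow variables $\hat{x}$ and then compare the resulting coefficient of each $y_{i,j}$ with the coefficient in \eqref{eq:jensen}. Take any $(\mu, y, \hat{x})$ satisfying \eqref{eq:dff}. The domain constraints in \eqref{eq:dff} give $\hat{x}_{i,j} \le M_{i,j}^{\hat{\xi}}\, y_{i,j}$, and $q_{i,j}\ge 0$ may be assumed (otherwise $\hat{x}_{i,j}=0$ is optimal and the corresponding term can be dropped). Hence
\[
\mu \le \sum_{i\in I}\sum_{j\in J} q_{i,j}\,\hat{x}_{i,j} \le \sum_{i\in I}\sum_{j\in J} q_{i,j} \min\left(\frac{\hat{\xi}_i^{c}}{r_{i,j}}, \hat{\xi}_j^{d}\right) y_{i,j}.
\]
Since $y \ge 0$, it then suffices to show, for every pair $(i,j)$, that
\[
\min\left(\frac{\max_{p\in\mathcal{P}}\mathbb{E}_{p}[\Xi_i^c]}{r_{i,j}},\ \max_{p\in\mathcal{P}}\mathbb{E}_{p}[\Xi_j^d]\right) \le \max_{p\in\mathcal{P}} \min\left(\frac{\mathbb{E}_{p}[\Xi_i^c]}{r_{i,j}},\ \mathbb{E}_{p}[\Xi_j^d]\right).
\]
This uses the definition $\hat{\xi} = \max_{p} \mathbb{E}_p[\Xi]$ component-wise.

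The key step, and the only place where the independence hypothesis enters, is establishing this max–min inequality. I will make the hypothesis precise as follows. Independence of $\Xi^c$ and $\Xi^d$ is read as a product structure of $\mathcal{P}$: the supply marginal is determined by the degree vector $d$ and the demand marginal by the zone assignment $h$. Moreover, any supply marginal occurring in $\mathcal{P}$ can be combined with any demand marginal occurring in $\mathcal{P}$ to give a distribution $\mathbb{P}_{d,h}\in\mathcal{P}$.

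Under this reading, fix $(i,j)$. Choose $p_1 \in \mathcal{P}$ attaining $\max_p \mathbb{E}_p[\Xi_i^c]$ and $p_2\in\mathcal{P}$ attaining $\max_p \mathbb{E}_p[\Xi_j^d]$; these maxima exist because $\mathcal{P}$ is finite. Let $p\in\mathcal{P}$ have the supply marginal of $p_1$ and the demand marginal of $p_2$. Then $\mathbb{E}_p[\Xi_i^c]$ and $\mathbb{E}_p[\Xi_j^d]$ are simultaneously maximal, so the left-hand side equals $\min(\mathbb{E}_p[\Xi_i^c]/r_{i,j}, \mathbb{E}_p[\Xi_j^d])$. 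This is at most the right-hand side. In fact equality holds, since the reverse inequality is trivial by monotonicity of $\min$. Summing over $(i,j)$ with weights $q_{i,j} y_{i,j}\ge 0$ yields \eqref{eq:jensen}.

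I expect the main obstacle to be justifying that the mixed distribution $p$ really lies in $\mathcal{P}$. Without independence, the maximizers of the supply and demand expectations may come from incompatible designs, and then the inequality can fail. I would state this product-structure interpretation explicitly as the meaning of the hypothesis. I would also note that since $\mathcal{P}$ is indexed by pairs $(d,h)$ and only the marginal means matter, the set of mean vectors is a Cartesian product of supply means and demand means, which is exactly what the argument needs.
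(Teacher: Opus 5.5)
Your proposal is correct and is essentially the paper's proof. You bound $\mu$ by $\sum_{i,j} q_{i,j} M_{i,j}^{\hat{\xi}} y_{i,j}$ via the domain constraints in \eqref{eq:dff}, then use independence to get, for each pair $(i,j)$, a single $p^{i,j}\in\mathcal{P}$ attaining both $\hat{\xi}_i^c$ and $\hat{\xi}_j^d$. The paper asserts only this pairwise simultaneous attainment, and that is all your argument uses. Your Cartesian-product reading is stronger than needed, and justifying it by the $(d,h)$ indexing does not literally work, since not every pair $(d,h)$ is induced by a design (as the appendix notes). Also, $q_{i,j}\ge 0$ is a standing assumption that cannot be argued away: a feasible $\hat{x}$ in \eqref{eq:dff} need not be optimal, and with a negative $q_{i,j}$ the implication fails.
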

\begin{proof}{Proof.}
    By independence of $\Xi^c$ and $\Xi^d$, for every $i \in I$ and $j \in J$, there exists a distribution $p^{i,j} \in \mathcal{P}$ such that
    \begin{equation*}
        \label{eq:independence}
        \hat{\xi}_i^c = \max_{p \in \mathcal{P}} \mathbb{E}_{\Xi \sim p} \left[\Xi_i^{c}\right] = \mathbb{E}_{\Xi \sim p^{i,j}} \left[\Xi_i^{c}\right]
        \quad \text{and} \quad
        \hat{\xi}_j^d = \max_{p \in \mathcal{P}} \mathbb{E}_{\Xi \sim p} \left[\Xi_j^{d}\right] = \mathbb{E}_{\Xi \sim p^{i,j}} \left[\Xi_j^{d}\right].
    \end{equation*}
    If $(\mu, y, \hat{x})$ satisfies~\eqref{eq:dff}, then
    \begin{align*}
        \mu \le \sum_{i \in I} \sum_{j \in J} q_{i,j} \hat{x}_{i,j}
        & \le \sum_{i\in I} \sum_{j \in J} q_{i,j} \cdot M_{i,j}^{\hat{\xi}} \cdot y_{i,j}
        = \sum_{i\in I} \sum_{j \in J} q_{i,j} \cdot \min \left( \frac{\hat{\xi}_i^c}{r_{i,j}}, \hat{\xi}_j^d \right) \cdot y_{i,j} \\
        & \qquad = \sum_{i\in I} \sum_{j \in J} q_{i,j} \cdot \min \left(
        \frac{\mathbb{E}_{\Xi \sim p^{i,j}} \left[\Xi_i^{c}\right]}{r_{i,j}}, \mathbb{E}_{\Xi \sim p^{i,j}} \left[\Xi_j^{d}\right] \right) \cdot y_{i,j} \\
        & \qquad \le \sum_{i\in I} \sum_{j \in J} q_{i,j} \cdot \max_{p \in \mathcal{P}} \min \left( \frac{\mathbb{E}_{\Xi \sim p} \left[\Xi_i^{c}\right]}{r_{i,j}}
        , \mathbb{E}_{\Xi \sim p} \left[\Xi_j^{d}\right]
        \right) \cdot y_{i,j}. \qedhere
    \end{align*}
\end{proof}
We remark that in our application, supply and demand distributions are naturally independent.

\subsection{Dominant Flow Constraints for Subsets of Distributions}\label{sec:refined_dff}

While \eqref{eq:jensen}, \eqref{eq:dominant_scenario}, and~\eqref{eq:dff} impose valid upper bounds for \emph{all} distributions in $\mathcal{P}$, we can further strengthen our formulation by imposing valid upper bounds on subsets of distributions.
To illustrate the idea, let $i' \in I$ and $k' \in \{2, \dots, |J|\}$ (at $k'= 1$, the dominant flow constraints for all distributions are already valid because the highest performance of a plant is achieved when it is dedicated to one product).
Denote by $\mathcal{P}_{i', k'} \subseteq \mathcal{P}$ the subset of distributions induced by designs in which plant node $i'$ has a degree of at least $k'$.
Now, we may refine the dominant scenario over $\mathcal{P}_{i', k'}$ as $\hat{\xi}(i', k') \coloneqq \max_{p \in \mathcal{P}_{i', k'}} \mathbb{E}_{\Xi \sim p}[\Xi]$.
Then, the inequality $\mu \le f(y, \hat{\xi}(i',k'))$ is valid for all designs $y$ that induce distributions in $\mathcal{P}_{i', k'}$.
Hence, the constraint
    $\mu \le f(y, \hat{\xi}(i',k')) + \sum_{k = 0}^{k'-1} w_{i', k} \cdot U$
is valid for our formulation.
Again, we replace $f(y, \hat{\xi}(i',k'))$ by introducing second-stage flow variables $\hat{x}(i',k')$ that represent flows under the dominant scenario $\hat{\xi}(i', k')$.
The resulting constraints are
\begin{equation}\label{eq:dffs}\tag{DFC-S}
    \begin{aligned}
        &\; \mu \le \sum_{i\in I} \sum_{j \in J} q_{i,j} \; \hat{x}(i',k')_{i,j} + \sum_{k = 0}^{k'-1} w_{i', k} \cdot U,\\
        &\sum_{j \in J}   r_{i,j} \; \hat{x}(i',k')_{i,j} \le \hat{\xi}(i',k')^c_i &&\forall \ i \in I, \\
        & \sum_{i \in I} \hat{x}(i',k')_{i,j} \le \hat{\xi}(i',k')^d_j &&\forall \ j \in J, \\
        & \; 0 \le \hat{x}(i',k')_{i,j} \le M_{i,j}^{\hat{\xi}(i',k')} \; y_{i,j} &&\forall \ i \in I, \, j \in J.
    \end{aligned}
\end{equation}
Later, when we refer to \eqref{eq:dffs}, we mean including these constraints for all choices of $i' \in I$ and $k' \in \{2, \dots, |J|\}$.

Another natural refinement of the dominant scenario arises from considering subsets of demand distributions as follows.
Let $j' \in J$ and $\emptyset \ne Z \subseteq \mathcal{Z}$, and let $\mathcal{P}_{j', Z} \subseteq \mathcal{P}$ be the subset of distributions that are induced by designs such that product $j'$ is only assigned to plants in zones $Z$ and for each zone in $Z$ there is at least one plant assigned to product $j'$.
Analogously, we may refine the dominant scenario over $\mathcal{P}_{j', Z}$ and call it $\hat{\xi}(j', Z)$.
Consequently, the constraint
$\mu \le f(y, \hat{\xi}(j',Z)) + [ |Z| - \sum_{z \in Z} u_{j', z} + \sum_{z \notin Z}  u_{j', z}] \cdot U$
is valid for our formulation.
Introducing second-stage flow variables $\hat{x}(j',Z)$ that represent flows under the dominant scenario $\hat{\xi}(j', Z)$, we obtain the constraints
\begin{equation}\label{eq:dffd}\tag{DFC-D}
    \begin{aligned}
        &\; \mu \le \sum_{i\in I} \sum_{j \in J} q_{i,j} \; \hat{x}(j',Z)_{i,j} + \left[ |Z| - 
            \sum_{z \in Z} u_{j', z} + \sum_{z \notin Z} u_{j', z}
        \right] \cdot U,\\
        &\sum_{j \in J}   r_{i,j} \; \hat{x}(j',Z)_{i,j} \le \hat{\xi}(j',Z)^c_i &&\forall \ i \in I, \\
        & \sum_{i \in I} \hat{x}(j',Z)_{i,j} \le \hat{\xi}(j',Z)^d_j &&\forall \ j \in J, \\
        & \; 0 \le \hat{x}(j',Z)_{i,j} \le M_{i,j}^{\hat{\xi}(j',Z)} \; y_{i,j} &&\forall \ i \in I, \, j \in J.
    \end{aligned}
\end{equation}
When we refer to \eqref{eq:dffd}, we mean including these constraints for all choices of $j' \in J$ and $\emptyset \ne Z \subseteq \mathcal{Z}$.
We exclude $Z=\emptyset$ because this case can provide a nontrivial bound on $\mu$ only when $y_{i,j'}=0$ for all $i\in I$, in which case the resulting constraints do not further strengthen \eqref{eq:dff}.

Clearly, one may include further refinements of dominant scenarios by considering other subsets of distributions.
In this work, we focus on the above two types of refinements since they are natural in our application context, i.e., the grouping of distributions into their corresponding subsets aligns with the definition and use of the auxiliary variables $w$ and $u$.
To include~\eqref{eq:dffs} and~\eqref{eq:dffd} in Problem~\eqref{eq:model}, we need $(|J|-1)\cdot|I| + 2^{|\mathcal{Z}|} \cdot|J|$ flow formulations, each having $|I|\times|J|$ flow variables and $1 + |I| + |J| + |I| \times |J| $ constraints.
Therefore, it is important to assess the trade-off between the computational benefits of adding these constraints and the additional computational burden they impose.
We investigate this trade-off in our computational study in Section~\ref{sec:experiments}.

%% file: solution.tex
\section{Solution Approach}\label{sec:solution_approach}

In Section~\ref{sec:model}, we presented a mixed-integer linear programming formulation for the network flexibility design problem under endogenous uncertainty.
Afterwards, we derived valid distribution-free inequalities in Section~\ref{sec:strengthening} to strengthen the formulation.
Next, we adopt the L-shaped method for two-stage stochastic programs with endogenous uncertainty \citep{pantuso2025shaped} to solve the problem.
We remove all distribution-specific constraints~\eqref{eq:final_benders_cuts} from Problem~\eqref{eq:model} to obtain a relaxed master problem
\begin{equation}\label{eq:rmp}\tag{RMP}
    \max_{y, \mu, w, u} \quad \{\eqref{eq:combined_objective} \ | \ \eqref{eq:endo_supply_reformulation}, \ \eqref{eq:link_y_u}, \ \eqref{eq:domain}\},
\end{equation} 
and then iteratively add optimality cuts from~\eqref{eq:final_benders_cuts} to~\eqref{eq:rmp} until its solution is feasible to Problem~\eqref{eq:model}.
Note that no feasibility cuts are required since the second-stage problem is feasible for any first-stage solution $y$.
Our decomposition scheme with distribution-specific optimality cuts is summarized in Algorithm~\ref{alg:decomposition}.
\begin{algorithm}
    \caption{Decomposition scheme with distribution-specific optimality cuts}
    \label{alg:decomposition}
    \begin{algorithmic}[1]
        \State \textbf{Input:} \eqref{eq:rmp}.
        \State \textbf{Output:} Optimal design $y^*$.
        \While{true}
            \State Solve~\eqref{eq:rmp} to obtain $(y^{*}, \mu^{*}, w^{*}, u^{*})$.
            \State Determine $\mathbb{P}_{d,h}$ where $d$ and $h$ correspond to $w^{*}$ and $u^{*}$, respectively.
            \State Compute $(\alpha^{*, \xi}, \beta^{*, \xi}, \rho^{*, \xi})$ for all $\xi \in S_{d,h}$ by solving Problem~\eqref{eq:dual} with $y = y^*$.
            \If{$\mu^* \le \frac{1}{|S_{d,h}|} \sum_{\xi \in S_{d,h}} \left( \sum_{i \in I} \xi_{i}^{c} \; \alpha_{i}^{*, \xi} + \sum_{j \in J} \xi_{j}^{d} \; \beta_j^{*, \xi} + \sum_{(i,j) \in I \times J} M^{\xi}_{i,j}  \; \rho_{i,j}^{*, \xi} \; y^*_{i,j} \right)$}
                \State Return $y^*$.
            \Else
                \State Add~\eqref{eq:final_benders_cuts} for $(\alpha^{*}, \beta^{*}, \rho^{*})$, $d$, and $h$ to~\eqref{eq:rmp}.
            \EndIf
        \EndWhile
    \end{algorithmic}
\end{algorithm}
It serves as our baseline approach and can be strengthened with the formulations proposed in Section~\ref{sec:strengthening}.
We add~\eqref{eq:jensen} to~\eqref{eq:rmp} and include one~\eqref{eq:dominant_scenario} per iteration at Step~10 of Algorithm~\ref{alg:decomposition} (using $y^*$ and $\hat{\xi}$ in Problem~\eqref{eq:dual} and computing the corresponding optimal dual solution).
Our distribution-free formulations~\eqref{eq:dff},~\eqref{eq:dffs}, and~\eqref{eq:dffd} are added to~\eqref{eq:rmp} at the beginning of Algorithm~\ref{alg:decomposition}.

For our computational study (see Section~\ref{sec:cut_perf}), we consider the baseline approach and its strengthened versions, referred to as Approaches~(i)--(vi):
(i) baseline, (ii) \eqref{eq:jensen} and~\eqref{eq:dominant_scenario} (the state-of-the-art benchmark), (iii) \eqref{eq:dff}, (iv) \eqref{eq:dff},~\eqref{eq:dffs}, and~\eqref{eq:dffd},  where~\eqref{eq:dffs} is applied only when supply uncertainty is endogenous and~\eqref{eq:dffd} is applied only when demand uncertainty is endogenous, (v) \eqref{eq:dff} and~\eqref{eq:dffd}, (vi) \eqref{eq:dff} and~\eqref{eq:dffs} (see Table~\ref{tab:constraint_configurations}).

\begin{table}[t]
    \centering
    \small
    \setlength{\tabcolsep}{9pt}
    \renewcommand{\arraystretch}{1}
    \caption{Strengthened formulation variations for our computational study.}
    \label{tab:constraint_configurations}
    \begin{tabular}{llllll}
        \toprule
        (i) & (ii) & (iii) & (iv) & (v) & (vi) \\
        \midrule

        \begin{tabular}[t]{@{}l@{}}
            baseline
        \end{tabular}
        &
        \begin{tabular}[t]{@{}l@{}}
            \eqref{eq:jensen} \\
            \eqref{eq:dominant_scenario}
        \end{tabular}
        &
        \begin{tabular}[t]{@{}l@{}}
            \eqref{eq:dff}
        \end{tabular}
        &
        \begin{tabular}[t]{@{}l@{}}
            \eqref{eq:dff} \\
            \eqref{eq:dffs} \\
            \eqref{eq:dffd}
        \end{tabular}
        &
        \begin{tabular}[t]{@{}l@{}}
            \eqref{eq:dff} \\
            \eqref{eq:dffd}
        \end{tabular}
        &
        \begin{tabular}[t]{@{}l@{}}
            \eqref{eq:dff} \\
            \eqref{eq:dffs}
        \end{tabular}
        \\

        \bottomrule
    \end{tabular}
\end{table}

For the sake of completeness, we present formal statements showing that optimality cuts derived at Step~10 of Algorithm~\ref{alg:decomposition} remove infeasible solutions from~\eqref{eq:rmp} and that Algorithm~\ref{alg:decomposition} converges in finitely many iterations (see Appendix~\ref{sec:appendix_proofs}).

%% file: experiments.tex
\section{Computational Insights}\label{sec:experiments}

In this section, we provide computational insights into the performance of our baseline approach and distribution-free formulations.
First, we compare the baseline approach with the exhaustive distribution enumeration approach (see Section~\ref{sec:naive}), serving as an exact benchmark for our problem.
Second, we evaluate the performance of the baseline approach when augmented with the formulations proposed in Section~\ref{sec:strengthening}.
We assess the effectiveness of Approaches~(i)--(iv) in three uncertainty regimes: Regime~(1), endogenous supply and demand; Regime~(2), exogenous supply and endogenous demand; and Regime~(3), endogenous supply and exogenous demand.
This allows us to assess the benefits of the proposed strengthened formulations under various uncertainty regimes.
Moreover, we examine the relative impact of different combinations of the distribution-free formulations, namely, Approaches~(iii)--(vi), under varying parameter settings in Regime~(1).
Finally, we demonstrate that Approach~(iv) provides strong lower bounds in the early stages of the solution process.

\subsection{Setup of Experiments}\label{sec:setup_expo}

We generate instances of 12 sizes with 2, 3, and 4 plants, and the number of products ranging from the number of plants to twice the number of plants.
Instance sizes are selected such that problems remain computationally tractable within the time limit of $1{,}800$ seconds.
We denote each instance size by a tuple $(|I|,|J|)$ and report average results over 10 random instances per size.

For all instances, we define $r_{i,j}$, $s_{i,j}$, and $q_{i,j}$ as 1, 100, and 7, respectively (our choice of parameters is in line with \cite{feng2017process}).
We assume that, for every design choice $y$, plants' capacities and products' demands are independent and normally distributed.
Each instance is further specified using distribution parameters $\mu_i^c$, $\sigma_i^c$ for each $i \in I$ and $\mu_j^d$, $\sigma_j^d$ for each $j \in J$, which will define the distribution $\mathbb{P}_{d,h}$ (to be explained in the next paragraph).
We set $\mu_j^d = 100$ for all $j \in J$, and sample $\mu^c \in \R^I$ uniformly from the polytope
$
    \Gamma \coloneq \{ \mu \in \R^{I} : \mu_{i} \geq 110 \; \forall i \in I, \, 110 \cdot |J| \le \sum \nolimits_{i \in I} \mu_{i} \le 130 \cdot |J|\}.
$
Moreover, we sample $\sigma_i^c$ and $\sigma_j^d$ uniformly from the interval $(0.05 \mu_i^c, 0.15 \mu_i^c)$ and the interval $(0.3 \mu_j^d, 0.5 \mu_j^d)$, respectively.
The choice of $\sigma_j^d$ is in the range of the $40\%$ forecast error typically observed in the automotive industry (see, e.g., \citeauthor{jordan1995principles}, \citeyear{jordan1995principles}; \citeauthor{deng2013process}, \citeyear{deng2013process}).
This construction yields heterogeneous and unbalanced production networks in which total expected supply moderately exceeds total expected demand, 
reducing the likelihood that plants or products are excluded due to capacity scarcity.
In scenario sampling, negative realizations are truncated at zero. 
To keep the samples centered around their nominal mean values, realizations exceeding twice the mean are truncated at twice the mean.

The distribution $\mathbb{P}_{d,h}$ is defined as follows.
On the supply side, if plant $i$ is assigned to $d_{i}$ products, then its capacity follows a normal distribution with mean $\mu^c_{i,d_i} \coloneq \mu^c_{i} \cdot (1 - 0.0162 \cdot (d_{i} - 1))$ and standard deviation $\sigma^c_{i,d_i} \coloneq \nicefrac{\sigma_i^c}{\mu_i^c} \cdot \mu^c_{i,d_i} $.
\citet{choudhary2018flexibility} report that automotive assembly plants on average lose approximately $1.62\%$ of annual capacity due to model changeovers. 
In this definition, the supply coefficient of variation (CV) remains unchanged.

On the demand side, we generate two zones $\mathcal{Z} = \{0,1\}$ (namely, domestic and foreign zones).
To assign plants to zones, we sample zone assignments uniformly from the set of all possible assignments, in which each zone contains at least one plant. 
If product $j$ is only assigned to the domestic zone, i.e., $h(j) = \{0\}$, then $\mu_{j, h(j)}^d = \mu_j^d$ and $\sigma_{j, h(j)}^d = \sigma^d_j$.
In case it is assigned only to the foreign zone, i.e., $h(j) = \{1\}$, the mean demand reduces to $\mu_{j, h(j)}^d = \mu_j^d \cdot (1 - 0.3)$ and the demand CV increases such that $\sigma_{j, h(j)}^d = (1+\nicefrac{0.3}{3}) \cdot \nicefrac{\sigma_j^d}{\mu_j^d} \cdot \mu_{j, h(j)}^d $.
Regarding the choice of parameters, the $30\%$ mean demand loss is based on the import reductions reported by \cite{amiti2019impact}, and the $10\%$ CV increase is a conservative estimate to reflect the heightened uncertainty level associated with foreign sourcing.
Our mechanism for demand loss due to foreign sourcing aligns with our country-of-origin motivations as well.
In the case of mixed sourcing for product $j$, i.e., $h(j) = \{0,1\}$, we set $\mu_{j, h(j)}^d = \mu_j^d \cdot (1 - 0.11)$ and $\sigma_{j, h(j)}^d = (1+\nicefrac{0.11}{3}) \cdot \nicefrac{\sigma_j^d}{\mu_j^d} \cdot \mu_{j, h(j)}^d$.

When supply or demand uncertainty is exogenous (in Regimes~(2) and~(3)), supply at plant $i$ follows a normal distribution with parameters $\mu_i^c$ and $\sigma_i^c$, and demand for product $j$ follows a normal distribution with parameters $\mu_j^d$ and $\sigma_j^d$.
Note that our design of experiments is for a focal market that aggregates the demand behavior of all markets given a sourcing decision, rather than modeling each market separately. 
Given the strategic nature of the supply network design problem, such aggregation is common in supply chain management. 

To implement the proposed formulations in Section~\ref{sec:strengthening}, we need to specify the dominant scenarios $\hat{\xi}$, $\hat{\xi}(i',k')$  for each $i' \in I$ and $k' \in \{2, \dots, |J|\}$, and $\hat{\xi}(j', Z)$ for each $j' \in J$ and $Z \subseteq \mathcal{Z} \setminus \{\emptyset\}$.
By our design of uncertainty mechanism for $\mathbb{P}_{d,h}$, $\hat{\xi} = (\mu^c, \mu^d)$.
Furthermore, $\hat{\xi}(i',k')$ is determined as $\hat{\xi}(i',k')_{i'}^c = \mu_{i',k'}^c$, $\hat{\xi}(i',k')_i^c = \mu_i^c$ for all $i \in I\setminus\{i'\}$, and $\hat{\xi}(i',k')_j^d = \mu_j^d$ for all $j \in J$.
Finally, for $\hat{\xi}(j', Z)$ we have $\hat{\xi}(j', Z)_{j'}^d = \mu_{j',Z}^d$, $\hat{\xi}(j', Z)_j^d = \mu_j^d$ for all $j \in J\setminus\{j'\}$, and $\hat{\xi}(j', Z)_i^c = \mu_i^c$ for all $i \in I$. 

All experiments are performed on a standard laptop.
The implementation is coded in Python, and Gurobi (version 13) is used for solving the optimization problems.
Our implementation of Algorithm~\ref{alg:decomposition} relies on Gurobi callback functions to add optimality cuts on integer nodes of the branch-and-bound tree.
At Step~10 of Algorithm~\ref{alg:decomposition}, we feed feasible solutions with corrected expected second-stage profit $\mu$ to the solver.

To approximate the second-stage problem, we set $|S_{d,h}| = 1{,}000$ for all $(d,h) \in \mathcal{P}$.
We found this sample size to be sufficiently large for Algorithm~\ref{alg:decomposition} to yield consistent optimal objective values, up to numerical precision, across most instances and approaches solved to optimality within the time limit.
Because the number of distributions is large, it is not efficient to pre-sample scenarios for every distribution in advance.
Instead, once an incumbent solution and its induced distribution are visited during the solution process, we sample scenarios.
Moreover, we ensure the consistency of sampled scenarios across different algorithmic settings.
Furthermore, we use five parallel processes to solve the scenario sub-problems concurrently. 
To uniformly sample mean capacities from $\Gamma$, we implement the hit-and-run approach~\citep{sun2024polytopewalksparsemcmcsampling} using the python package \texttt{PolytopeWalk}.

\subsection{Performance Against Exhaustive Distribution Enumeration}\label{sec:naive}

Recall that the baseline approach must enumerate all distributions in $\mathcal{P}$ at least once before it terminates.
This observation motivates exhaustive distribution enumeration (EDE) as a natural benchmark for our baseline model:
for all combinations of degree vectors $d \in \mathbb{Z}_{\ge 0}^I$ and assignments $h : J \to 2^{\mathcal{Z}}$, we fix the distribution $\mathbb{P}_{d,h}$ and solve a standard two-stage stochastic program over the subset of design choices $y \in \{0,1\}^{I \times J}$ corresponding to $d$ and $h$.
The best solution among all distributions is the optimal design (the mathematical model solved at each iteration of the EDE approach is available in Appendix~\ref{sec:appendix_model}).

We implement EDE using the standard L-shaped method with Gurobi callbacks and parallel scenario computations.
We consider Regime~(1) and solve instances of sizes $(2,2)$ to $(3,5)$.
Both approaches yield the same objective values for the same instance (up to numerical precision).
We compare them with respect to the solution time and the number of iterations during the solution process (for the EDE approach, we sum the total number of iterations of the L-shaped method over all two-stage stochastic programs corresponding to individual distributions).

Figure~\ref{fig:naive_vs_decomp} illustrates the ratio of the two performance metrics for EDE relative to the baseline approach, which is above one in both metrics across all instance sizes.
For instances of sizes $(3,4)$ and $(3,5)$, the baseline approach needs a much smaller number of iterations compared to the EDE approach.
Intuitively, the baseline approach needs fewer inequalities to exclude distributions that do not correspond to optimal solutions from the search space.
In contrast, the EDE approach has to solve a two-stage stochastic program to optimality for every distribution.   
Regarding instances of sizes $(2,2)$ to $(3,3)$, both approaches require a comparable number of iterations.
However, the baseline approach still remains substantially faster.
This is counter-intuitive, as the two-stage stochastic program solved in each iteration of the EDE approach is smaller than the~\eqref{eq:rmp} solved in each iteration of the baseline approach. 
One possible explanation is that in the EDE approach, the solver has to repeatedly conduct an independent branch-and-bound search for each distribution.
In contrast, in the baseline approach, the solver maintains one search process on the master problem.

\begin{figure}
\centering
\begin{tikzpicture}
\begin{axis}[
    ybar,
    width=0.6\textwidth,
    height=4cm,
    bar width=10pt,
    xtick=data,
    xticklabels={(2,2),(2,3),(2,4),(3,3),(3,4),(3,5)},
    ylabel={Ratio},
    ymin=1,
    enlarge x limits=0.25,
    grid=both,
    major grid style={dashed, gray!60},
    minor grid style={dashed, gray!30},
    legend style={
    at={(0.5,1.02)},
        anchor=south,
        font=\small,
        legend columns=2,
        column sep=8pt
    },
]

\addplot[
    fill={rgb,255:red,65; green,117; blue,5},
    pattern=north east lines,
    pattern color=black,
    draw=black,
    bar shift=-6pt,
]
coordinates {
(1, 1.588235294)
(2, 1.215384615)
(3, 1.070038911)
(4, 1.325416582)
(5, 2.438473864)
(6, 2.978934955)	
};
\addlegendentry{number of iterations}

\addplot[
    fill={rgb,255:red,111; green,111; blue,111},
    draw=black,
    bar shift=+6pt,
]
coordinates {
(1, 1.809045526)
(2, 2.056448104)
(3, 2.206412127)
(4, 2.024539491)
(5, 3.032842251)
(6, 3.496260571)
};
\addlegendentry{solution time}

\end{axis}
\end{tikzpicture}
\caption{Relative performance of the EDE approach to the baseline approach. 
Average values over 10 instances are reported.}
\label{fig:naive_vs_decomp}
\end{figure}

\subsection{Performance of Distribution-free Cuts}\label{sec:cut_perf}

Next, we compare the performance of Approaches~(i)--(iv), under Regimes~(1),~(2), and~(3).
Moreover, we compare Approaches~(iii)--(vi) under Regime~(1).
We report the optimality gap  
$\nicefrac{(\text{Upper Bound} - \text{Lower Bound})}{\text{Lower Bound}}$,
the number of iterations, the solution time, the number of visited distributions (VD), the ratio of visited supply distributions (RVSD), and the ratio of visited demand distributions (RVDD).
The latter two metrics are defined as the number of visited supply (demand) distributions divided by the total number of possible supply (demand) distributions.
Metrics VD, RVSD, and RVDD quantify the effectiveness of the formulations introduced in Section~\ref{sec:strengthening} in eliminating distributions from the search space.
For each instance size, we report the average metric values over the 10 random instances.

\subsubsection{Regime (1): Endogenous Supply and Demand Uncertainties}\label{sec:endoboth}
\input{endo_sup_endo_dem}

\subsubsection{Regime (2): Exogenous Supply and Endogenous Demand Uncertainties}\label{sec:endodem}
\input{exo_sup_endo_dem}

\subsubsection{Regime (3): Endogenous Supply and Exogenous Demand Uncertainties}\label{sec:endosup}
\input{endo_sup_exo_dem}

\subsection{Lower Bound Quality in Early Stages}\label{sec:early_lb}
For instances under Regime~(1), we observe that Approach~(iv) produces high-quality lower bounds early in the solution process.
To illustrate this behavior, Figure~\ref{fig:bounds_two_panels} shows the progress of the lower and upper bounds for two representative instances of sizes $(4,6)$ and $(4,7)$.
In both cases, the lower bound approaches the final objective value rapidly.
The final incumbent value is reached after approximately 157 and 265 seconds, respectively, while the lower bound is already within a $1\%$ gap of the final objective value after approximately one and four seconds, respectively.

\begin{figure}[H]
\centering

\begin{tikzpicture}
\begin{groupplot}[
    group style={group size=2 by 1, horizontal sep=3cm},
    width=0.4\textwidth,
    height=5cm,
    grid=both,
]

\nextgroupplot[
    title={Size (4,6)},
    xlabel={Time (s)},
    ylabel={},
    xmin=-15,                 
    xmax=1250,                
    xtick={0, 400, 800, 1200},
    xticklabels={0, 400, 800, 1200},
    scaled x ticks=false,
    enlarge x limits=false,
    legend columns=2,
    legend style={
        at={(1, 1)},
        anchor=south,
        xshift=1cm,
        yshift=0.7cm,
        draw=black,
        fill=white,
        fill opacity=0.95,
        text opacity=1,
        /tikz/every even column/.append style={column sep=10pt},
    }
]
\addplot+[const plot, color=black!75, mark=none, thick, dashed]
    table[x=t, y=ub, col sep=comma] {bounds_98.csv};
\addlegendentry{UB}

\addplot+[const plot, color=ORBlue!100, mark=none, thick]
    table[x=t, y=lb, col sep=comma] {bounds_98.csv};
\addlegendentry{LB}

\nextgroupplot[
    title={Size (4,7)},
    xlabel={Time (h)},
    ylabel={},
    ytick={3250, 3500, 3750, 4000, 4250},
    xmin=-600,
    xmax=30500,
    xtick={0, 7200, 14400, 21600, 28800},
    xticklabels={0,2,4,6,8},
    scaled x ticks=false,
    enlarge x limits=false,
]
\addplot+[const plot, color=black!75, mark=none, thick, dashed]
    table[x=t, y=ub, col sep=comma] {bounds_106.csv};

\addplot+[const plot, color=ORBlue!100, mark=none, thick]
    table[x=t, y=lb, col sep=comma] {bounds_106.csv};

\end{groupplot}
\end{tikzpicture}

\caption{Progress of the lower and upper bounds of Approach~(iv) for two representative instances (one for each size $(4,6)$ and $(4,7)$) under Regime~(1).}
\label{fig:bounds_two_panels}
\end{figure}

%% file: endo_sup_endo_dem.tex
Assuming supply and demand uncertainties are both endogenous, we first compare Approaches~(i)--(iv).
All approaches solve instances of sizes $(2,2)$ to $(4, 5)$ to optimality within the time limit, except for Approach~(i) for instances of sizes $(3,6)$ and $(4, 5)$.
For these solvable instances, Figure~\ref{fig:runboth} shows the solution times normalized by Approach~(iv).
For instances of sizes $(4,6)$, $(4,7)$, and $(4,8)$, Figure~\ref{fig:gapboth} shows the optimality gaps over all approaches, except for Approach~(i) as it fails to yield a meaningful upper bound.
Table~\ref{tab:both} in Appendix~\ref{sec:appendix_tables} summarizes the performance results for each instance size.

\begin{figure}
\centering
\pgfplotslegendfromname{sharedlegend}
\begin{subfigure}[b]{0.64\linewidth}
    \centering
    \begin{tikzpicture}
        \begin{axis}[
            width=\linewidth,
            height=5.5cm,
            ymode=log,
            log basis y=10,
            ymin=0.8,
            ymax=13,
            ytick={1,2,4,6,8,13},
            yticklabels={1,2,4,6,8,13},
            symbolic x coords={c22,c23,c24,c33,c34,c35,c36,c44,c45},
            xtick={c22,c23,c24,c33,c34,c35,c36,c44,c45},
            xticklabels={(2,2),(2,3),(2,4),(3,3),(3,4),(3,5),(3,6),(4,4), (4,5)},
            enlarge x limits=0.05,
            grid=both,
            ylabel={Ratio},
            grid style=dashed,
            major grid style={line width=.2pt,draw=gray!60},
            minor grid style={line width=.1pt,draw=gray!30},
            legend to name=sharedlegend,
            legend columns=4,
            legend style={
                font=\scriptsize,
                /tikz/every even column/.append style={column sep=6pt},
                rounded corners=1pt,
            }
        ]

        \addplot[
            only marks,
            color=black!70,
            mark=triangle*,
            mark size=2.5pt,
        ] coordinates {
            (c22, 2.766817762812924)
            (c23, 2.654965115142849)
            (c24, 4.172335710238858)
            (c33, 4.617818101690229)
            (c34, 5.5272587693481725)
            (c35, 4.603770776815129)
            (c36, nan)
            (c44, 12.357866238631118)
            (c45, nan)
        };
        \addlegendentry{App.~(i): baseline}

        \addplot[
            only marks,
            color=sorRed,
            mark=*,
            mark size=2.3pt,
        ] coordinates {
            (c22, 1.689934337651635)
            (c23, 1.258341686262989)
            (c24, 1.3140542467249818)
            (c33, 1.8749163240355013)
            (c34, 1.8193893797047178)
            (c35, 1.5050575742903145)
            (c36, 1.7161294372294227)
            (c44, 2.710570533263374)
            (c45, 2.3739123823671044)
        };
        \addlegendentry{App.~(ii): \eqref{eq:jensen}, \eqref{eq:dominant_scenario}}

        \addplot[
            only marks,
            color=sorGreen,
            mark=square*,
            mark size=2.3pt,
        ] coordinates {
            (c22, 1.1389843927713657)
            (c23, 1.2122365568582867)
            (c24, 1.198504927807829)
            (c33, 1.5597947556689602)
            (c34, 1.7506897579198577)
            (c35, 1.4859866065620002)
            (c36, 1.7219282339162403)
            (c44, 2.6062984102573448)
            (c45, 2.525733302953374)
        };
        \addlegendentry{App.~(iii): \eqref{eq:dff}}

        \addplot[
            only marks,
            color=ORBlue,
            mark=star,
            mark size=3.5pt,
            mark options={line width=1.2pt},
        ] coordinates {
            (c22,1.0)
            (c23,1.0)
            (c24,1.0)
            (c33,1.0)
            (c34,1.0)
            (c35,1.0)
            (c36,1.0)
            (c44,1.0)
            (c45,1.0)
        };
        \addlegendentry{App.~(iv): \eqref{eq:dff}, \eqref{eq:dffs}, \eqref{eq:dffd}}

        \end{axis}
    \end{tikzpicture}
    \caption{}
    \label{fig:runboth}
\end{subfigure}
\hfill
\begin{subfigure}[b]{0.35\linewidth}
    \centering
    \begin{tikzpicture}
    \begin{axis}[
        width=\linewidth,
        height=5.5cm,
        ybar,
        bar width=6pt,
        ymin=0,
        ymax=0.28,
        symbolic x coords={c46,c47,c48},
        xtick=data,
        xticklabels={(4,6),(4,7),(4,8)},
        enlarge x limits=0.3,
        ytick distance=0.10,
        minor y tick num=3,
        grid=both,
        ylabel={Gap},
        grid style=dashed,
        major grid style={line width=.2pt,draw=gray!70},
        minor grid style={line width=.1pt,draw=gray!50},
    ]

    \addplot[draw=sorRed, fill=sorRed!100] coordinates {
        
        (c46, 0.16772370423223865)
        (c47, 0.22196786249002468)
        (c48, 0.26126353397908714)
    };

    \addplot[draw=sorGreen, fill=sorGreen!100] coordinates {
        
        (c46, 0.15617509400675036)
        (c47, 0.22015385736283552)
        (c48, 0.2653575679805603)
    };

    \addplot[draw=ORBlue, fill=ORBlue!100] coordinates {
        
        (c46, 0.0990773806546518)
        (c47, 0.16243009283130716)
        (c48, 0.21257889370978864)
    };

    \end{axis}
    \end{tikzpicture}
    \caption{}
    \label{fig:gapboth}
\end{subfigure}
\caption{(a) The relative solution time of Approaches~(i)--(iv) under Regime~(1). Values are normalized by Approach~(iv). 
(b) The optimality gap of Approaches~(ii)--(iv) under Regime~(1).
Approach~(i) fails to provide a meaningful upper bound within the time limit.}
\label{fig:both}
\end{figure}

As shown in Figure~\ref{fig:runboth}, Approach~(iv) achieves the lowest solution time across all solvable instances.
Although it entails the largest number of additional inequalities in the~\eqref{eq:rmp}, this overhead is offset by a substantial reduction in the number of iterations required for convergence (see Table~\ref{tab:both}).
The drop in the number of iterations results partly from visiting significantly fewer distributions compared to other approaches (as evidenced by VD values in Table~\ref{tab:both}).
More specifically, Approach~(iv) effectively curtails the exploration of both supply and demand distributions (as shown by RVSD and RVDD metrics in Table~\ref{tab:both}).
Moreover, it has the smallest optimality gap as depicted in Figure~\ref{fig:gapboth}, indicating that early strengthening of the~\eqref{eq:rmp} is valuable when time is limited.

Approaches~(ii) and~(iii) are comparable.
The former strengthens the~\eqref{eq:rmp} with the~\eqref{eq:jensen} and one~\eqref{eq:dominant_scenario} in each iteration of Algorithm~\ref{alg:decomposition}.
If all possible \eqref{eq:dominant_scenario}s are added to the~\eqref{eq:rmp} in Approach~(ii), the resulting model's formulation is effectively as strong as Approach~(iii) at the beginning of the solution process (excluding \eqref{eq:final_benders_cuts} in both cases).
Therefore, their performance differences are primarily driven by the order in which the~\eqref{eq:rmp} is strengthened.
Approach~(iii) starts with a stronger \eqref{eq:rmp} in the beginning, whereas Approach~(ii) has a smaller \eqref{eq:rmp}, which grows with an extra cut per iteration.
As demonstrated in Figure~\ref{fig:both}, both approaches exhibit similar solution times and optimality gaps across all instances.
Finally, the performance difference between Approaches~(iii) and~(iv) shows the significance of imposing distribution-free formulations for subsets of supply and demand distributions, as formulated in \eqref{eq:dffs} and \eqref{eq:dffd}.

We established that our distribution-free formulations in Approach~(iv) significantly enhance the performance of the baseline approach.
Next, we investigate the individual contributions of~\eqref{eq:dffs} and \eqref{eq:dffd}, and whether combining them yields additional benefits.
To this end, we compare the performance of Approaches~(iii)--(vi).
We conduct a sensitivity analysis on the 10 instances of size $(3,6)$ by increasing their mean product demands from $100$ to $180$ units, while keeping demand CV and all other parameters unchanged.
This shift tightens available capacity without altering the underlying uncertainty structure.
Note that instances of the size $(3,6)$ are sufficiently difficult as they are unsolvable by Approach~(i) within the time limit.
Table~\ref{tab:stress_demand} in Appnedix~\ref{sec:appendix_tables} summarizes the performance results for Approaches~(iii)--(vi) under varying demand levels.
Furthermore, Figure~\ref{fig:stress} depicts the solution times of Approaches~(iii)--(vi) relative to Approach~(iii).

First, we focus on the individual improvements due to the~\eqref{eq:dffs} and~\eqref{eq:dffd} when combined with the~\eqref{eq:dff}.
As the demand level increases, the solution time of Approach~(vi) generally improves with respect to Approach~(iii), while the solution time of Approach~(v) deteriorates at higher demand levels.
Therefore, the marginal benefits of the~\eqref{eq:dffs} and~\eqref{eq:dffd} are increasing and decreasing in the demand level, respectively.

These observations can be explained by how the dominant scenarios $\hat{\xi}$, $\hat{\xi}(i',k')$, and $\hat{\xi}(j', Z)$ evolve across varying demand levels.
Recall that the dominant scenarios determine the demand and capacity values on the right-hand side of \eqref{eq:dff}, \eqref{eq:dffs}, and \eqref{eq:dffd} formulations, respectively (except for the first constraint in each formulation).
By our design of $\Gamma$ and the uncertainty mechanism, the $\mu^c$ values are at least $110$ (with total mean capacity bounded between $660$ and $780$), and the efficiency loss is at $1.62\%$ per additional product. 
When the mean demand is low to moderate, given the demand loss of 11\% and 30\% due to mixed and foreign sourcing, the dominant scenarios are likely to set smaller right-hand sides for demand constraints compared to capacity constraints in each formulation.
Note that for~\eqref{eq:dffs}, if capacity constraints are not binding, the supply loss cannot restrict the corresponding production quantities, whereas binding demand constraints allow~\eqref{eq:dffd} to set tighter bounds on production quantities.
In fact, when the demand level is below $120$ units, most performance improvements are attained by adding~\eqref{eq:dff} and~\eqref{eq:dffd}. 
Increasing the demand level relaxes the demand constraints relative to capacity constraints in all formulations, raising the likelihood that the latter becomes binding.

Considering the best-performing combination, Table~\ref{tab:stress_demand} shows that Approach~(iv) consistently outperforms all other combinations.
We conclude that in the absence of structural knowledge on whether supply or demand constraints are binding in the distribution-free cuts' formulations, it is most reliable to employ Approach~(iv).

\begin{figure}
\centering
    \begin{tikzpicture}
        \begin{axis}[
            width=0.8\linewidth,
            height=5.5cm,
            ymin=0.55,
            ytick={0.6, 0.8, 1},
            yticklabels={0.6, 0.8, 1},
            symbolic x coords={c100, c110, c120, c130, c140, c150, c160, c170, c180},
            xtick=data,
            xticklabels={100, 110, 120, 130, 140, 150, 160, 170, 180},
            enlarge x limits=0.05,
            grid=both,
            ylabel={Ratio},
            grid style=dashed,
            major grid style={line width=.2pt,draw=gray!70},
            minor grid style={line width=.1pt,draw=gray!50},
            legend style={
                at={(0.5,1.06)},
                anchor=south,
                legend columns=4,
                font=\scriptsize,
                legend cell align=left,
                /tikz/every even column/.append style={column sep=6pt},
                draw=black,
                fill=white,
                fill opacity=0.95,
                text opacity=1,
                rounded corners=1pt,
            },
        ]

        \addplot[
            only marks,
            color=sorGreen,
            mark=square*,
            mark size=2.1pt,
        ] coordinates {
            (c100,1.0)
            (c110,1.0)
            (c120,1.0)
            (c130,1.0)
            (c140,1.0)
            (c150,1.0)
            (c160,1.0)
            (c170,1.0)
            (c180,1.0)
        };
        \addlegendentry{App.~(iii): \eqref{eq:dff}}

        \addplot[
            only marks,
            color=ORBlue,
            mark=star,
            mark size=3.3pt,
            mark options={line width=1.2pt},
        ] coordinates {
            (c100, 0.580744296)
            (c110, 0.701363602)
            (c120, 0.727394156)
            (c130, 0.676395003)
            (c140, 0.660435763)
            (c150, 0.660679567)
            (c160, 0.67473311)
            (c170, 0.668029142)
            (c180, 0.639775937)
        };
        \addlegendentry{App.~(iv): \eqref{eq:dff}, \eqref{eq:dffs}, \eqref{eq:dffd}}

        \addplot[
            only marks,
            color=red,
            mark=x,
            mark size=3pt,
            mark options={draw=red, fill=white, line width=0.8pt},
            ]  coordinates {
               (c100, 0.598036022)
               (c110, 0.710374426)
               (c120, 0.747583402)
               (c130, 0.730162197)
               (c140, 0.732068485)
               (c150, 0.765752079)
               (c160, 0.86232078)
               (c170, 0.884325247)
               (c180, 0.891203463)
        };
        \addlegendentry{App.~(v): \eqref{eq:dff}, \eqref{eq:dffd}}

        \addplot[
            only marks,
            color=black,
            mark=diamond,
            mark size=2.9pt,
            mark options={draw=black, fill=white, line width=0.8pt},
        ] coordinates {
            (c100, 0.949657032)
            (c110, 0.983716859)
            (c120, 0.946610942)
            (c130, 0.909282456)
            (c140, 0.86658172)
            (c150, 0.810232549)
            (c160, 0.7668503)
            (c170, 0.741544404)
            (c180, 0.722138208)
        };
        \addlegendentry{App.~(vi): \eqref{eq:dff}, \eqref{eq:dffs}}
        \end{axis}
    \end{tikzpicture}
    \caption{The relative solution time of Approaches~(iii)--(vi)  under Regime~(1), when the product demand increases from 100 to 180 units. Values are normalized by Approach~(iii).}
    \label{fig:stress}
\end{figure}

%% file: exo_sup_endo_dem.tex
Next, we assume that only demand uncertainty is endogenous (supply uncertainty is exogenous) and compare Approaches~(i)--(iv).
Table~\ref{tab:dem} in Appendix~\ref{sec:appendix_tables} summarizes the performance results for all instance sizes.
Since supply uncertainty is exogenous, $|\mathcal{P}| = 2^{2 \cdot |J|}$ which is substantially smaller than in Section~\ref{sec:endoboth}.
Therefore, Approach~(i) is able to solve instances up to size $(4,6)$ to optimality.
Analogous to Section~\ref{sec:endoboth}, Approaches~(ii) and~(iii) have comparable performance.
Finally, Approach~(iv) successfully limits the number of visited demand distributions (achieves the lowest RVDD), which in turn significantly reduces the number of iterations.
Therefore, it offers the best solution time and optimality gap.

%% file: endo_sup_exo_dem.tex
Finally, we compare Approaches~(i)--(iv) when supply uncertainty is endogenous (demand uncertainty is exogenous).
Table~\ref{tab:sup} in Appendix~\ref{sec:appendix_tables} summarizes the performance results for all instance sizes.
By our choice of instance sizes, $|\mathcal{P}| = (|J|+1)^{|I|}$ is smaller than the corresponding number of distributions for all but the $(4,4)$ and $(4,5)$ instances in Section~\ref{sec:endodem}, and it is smaller compared to number of distribution for all instances in Section~\ref{sec:endoboth}.
Therefore, Approach~(i) solves all instances up to size $(4,6)$ to optimality and obtains optimality gaps of $10\%$ and $16\%$ for instances of sizes $(4,7)$ and $(4,8)$, respectively.
As observed in sections~\ref{sec:endoboth} and~\ref{sec:endodem}, Approaches~(ii) and~(iii) show similar performance.
However, contrary to the previous observations, Approach~(iv) does not necessarily improve the solution time over Approach~(iii).
While in most instances Approach~(iv) reduces the number of iterations, the computational burden of solving the~\eqref{eq:rmp} with the additional constraints in~\eqref{eq:dffs} outweighs the benefits of fewer iterations.
We offer two explanations for this observation.
First, the smaller size of $\mathcal{P}$ compared to the previous sections decreases the need for additional distribution-free formulations.
Second, the choice of mean demand values for instances at $100$ units per product prevents the capacity constraints in \eqref{eq:dffs} from becoming binding, which in turn limits the effectiveness of these formulations (as shown in Table~\ref{tab:stress_demand} of Appendix~\ref{sec:appendix_tables}).

%% file: managerial.tex
\section{Managerial Insights: An Illustrative Example}\label{sec:managerial}
Choosing appropriate parameter values for the endogenous uncertainty mechanism can be challenging.
The impact of a plant's degree on the supply distribution can be quantified relatively directly because of its operational nature.
In contrast, modeling the demand distribution based on the zone assignments of a product is less direct.
For a product only sourced from a single zone, a market analysis can assess the country-of-origin and tariff effects.
When a product is sourced from multiple zones (mixed sourcing), the firm's ability to allocate production volumes across plants and zones may further affect the induced demand distribution.
This motivates a sensitivity analysis with respect to the parameters governing the endogenous effects.
It further provides insight into the robustness of the design with respect to parameter perturbations.

We perform a sensitivity analysis by examining different levels of mixed-sourcing mean demand loss and supply efficiency loss under two supply-variability settings: a low- and a high-variability setting.
This allows us to examine not only how endogenous effects change the optimal design, but also whether the direction and magnitude of this change depend on the underlying level of uncertainty.
Furthermore, we highlight the value of modeling endogenous effects by comparing the optimal design with the design that is optimal in the absence of endogenous effects, i.e., the exogenous-optimal design.
Under low supply variability, the exogenous-optimal design has minimal flexibility (each product is sourced from a single plant), whereas under high supply variability, the exogenous-optimal design is highly flexible (each pair of plants shares one product).
We demonstrate that the same endogenous effects may lead to different deviations from the exogenous-optimal design across the two settings.

\subsection{Example Setup}

Let $|I| = 3$ and $|J| = 6$, and assume that all plants and all products are homogeneous (allowing us to isolate endogenous effects).
We set $r_{i,j} = 1$, $s_{i,j} = 100$, and $q_{i,j} = 7$ for all $i \in I$ and $j \in J$.
Product demands are independent and normally distributed with $\mu_j^d = 100$ and $\sigma_j^d = 40$ for all $j \in J$. 
Supply capacities are also independent and normally distributed, with $\mu_i^c = 220$.
For low- and high-variability supply settings, we set $\sigma_i^c = 22$ and $\sigma_i^c = 88$ for all $i \in I$, respectively.
The choice of mechanism for $\mathbb{P}_{d,h}$ is the same as in Section~\ref{sec:setup_expo}:
on the supply side, if plant $i$ is assigned to $d_{i}$ products, then $\mu^c_{i,d_i} = \mu^c_{i} \cdot (1 - \lambda \cdot (d_{i} - 1))$, where $\lambda$ denotes the efficiency loss per product at a given plant, and $\sigma^c_{i,d_i} = \nicefrac{\sigma_i^c}{\mu_i^c} \cdot \mu^c_{i,d_i}$.
On the demand side, we assume that Plants~$1$ and~$2$ are in Zone~$0$ (domestic) and Plant~$3$ is in Zone~$1$ (foreign).
For product $j$, if $h(j) = \{0\}$, then $\mu_{j, h(j)}^d = \mu_j^d$ and $\sigma_{j, h(j)}^d = \sigma_j^d$.
If $h(j) = \{1\}$, then $\mu_{j, h(j)}^d = \mu_j^d \cdot (1 - 0.3)$ and $\sigma_{j, h(j)}^d = (1+\nicefrac{0.3}{3}) \cdot \nicefrac{\sigma_j^d}{\mu_j^d} \cdot \mu_{j, h(j)}^d$.
Finally, when $h(j) = \{0,1\}$, we set $\mu_{j, h(j)}^d = \mu_j^d \cdot (1 - \theta)$ and $\sigma_{j, h(j)}^d = (1+\nicefrac{\theta}{3}) \cdot \nicefrac{\sigma_j^d}{\mu_j^d} \cdot \mu_{j, h(j)}^d$, where $\theta$ denotes the demand loss in the case of mixed sourcing.
In order to compute the exogenous-optimal design, we remove the endogenous effects, i.e., the parameters for the supply distributions are $\mu_i^c$ and $\sigma_i^c$ at plant $i$, and the parameters for the demand distributions are $\mu_j^d$ and $\sigma_j^d$ for product $j$.

We perform our analysis under both supply variability settings for $\theta$ in the range of 7\% to 15\% (larger $\theta$ implies smaller demand benefits from pooling domestic and foreign plants) and for $\lambda$ in the range of $1\%$ to $6\%$ (larger $\lambda$ values pertain to industries with more significant changeover times, e.g., the process industry).
For each combination of $\theta$ and $\lambda$, we report the optimal flexibility design and the objective function gap relative to the exogenous-optimal design to illustrate the importance of capturing the endogenous effects (see Table~\ref{tab:managerial_combined}). 
Figure~\ref{fig:illustrative_example} depicts the optimal designs from Table~\ref{tab:managerial_combined}.
All instances are solved to optimality using Approach~(iv).
At each iteration of Algorithm~\ref{alg:decomposition}, we use $4{,}000$ scenarios to approximate the second-stage profit.
Because different designs induce different distributions, the performance of the optimal design and the exogenous-optimal design are evaluated under their respective distributions.
We evaluate each design out-of-sample using 20 independent Monte Carlo replications of ${20{,}000}$ scenarios and compute the average profit across all replications.
The objective function gap (OFG) of the optimal design relative to the exogenous-optimal design is then calculated as
$\text{OFG} = \nicefrac{(Z^* - Z^{\text{ex.-opt.}})}{Z^{\text{ex.-opt.}}},$
where $Z^*$ and $Z^{\text{ex.-opt.}}$ denote the average objective values of the optimal design and the exogenous-optimal design, respectively.
To distinguish between flexibility designs, we refer to them by their plants' node degrees using the tuple $(d_1, d_2, d_3)$.

\subsection{Results}

In the low-variability setting, the exogenous-optimal design is $(2,2,2)$ (see Figure~\ref{fig:illustrative_example}).
When either mixed sourcing substantially decreases market demand ($\theta$ is high) or the supply loss due to additional flexibility is high ($\lambda$ is high), the optimal design remains $(2,2,2)$.
That is, adding domestic links to products otherwise served from the foreign zone (reshoring products 5 and 6) is not favorable, because the demand gain from mixed sourcing relative to exclusive foreign sourcing is insufficient to compensate for the substantial flexibility investment costs and the additional efficiency loss due to increased product variety.

Conversely, as moving from foreign to mixed sourcing becomes more favorable ($\theta$ decreases) or efficiency loss becomes less severe ($\lambda$ decreases), the optimal design shifts from $(2,2,2)$ to $(2,3,2)$ (with negligible objective differences) or to $(3,3,2)$.
At $\theta = 9\%$ and $\lambda = 5\%$, the benefits of partially reshoring Product~5 (in the form of improved market demand) are offset by the additional efficiency loss incurred by assigning it to Plant~2 and the associated investment cost.
Hence, the OFG is zero.
When moving from foreign to mixed sourcing significantly improves the market demand and efficiency loss is relatively small, the optimal design changes to $(3,3,2)$, partially reshoring both Products~5 and~6 through Plants~2 and~1, respectively.
The OFG generally increases as $\lambda$ and $\theta$ decrease.

\begin{figure}
    \centering
    \footnotesize

    \tikzset{every picture/.style={line width=0.75pt}}      

    \begin{tikzpicture}[x=0.55pt,y=0.55pt,yscale=-0.9,xscale=0.9]

    \draw  [color={rgb, 255:red, 65; green, 117; blue, 5 }  ,draw opacity=0.38 ][fill={rgb, 255:red, 65; green, 117; blue, 5 }  ,fill opacity=0.11 ] (375.07,4643.1) .. controls (375.07,4632.88) and (383.35,4624.6) .. (393.57,4624.6) -- (393.57,4624.6) .. controls (403.78,4624.6) and (412.07,4632.88) .. (412.07,4643.1) -- (412.07,4730.1) .. controls (412.07,4740.32) and (403.78,4748.6) .. (393.57,4748.6) -- (393.57,4748.6) .. controls (383.35,4748.6) and (375.07,4740.32) .. (375.07,4730.1) -- cycle ;
    \draw  [color={rgb, 255:red, 65; green, 117; blue, 5 }  ,draw opacity=0.38 ][fill={rgb, 255:red, 65; green, 117; blue, 5 }  ,fill opacity=0.11 ] (209.07,4643.1) .. controls (209.07,4632.88) and (217.35,4624.6) .. (227.57,4624.6) -- (227.57,4624.6) .. controls (237.78,4624.6) and (246.07,4632.88) .. (246.07,4643.1) -- (246.07,4730.1) .. controls (246.07,4740.32) and (237.78,4748.6) .. (227.57,4748.6) -- (227.57,4748.6) .. controls (217.35,4748.6) and (209.07,4740.32) .. (209.07,4730.1) -- cycle ;
    \draw  [color={rgb, 255:red, 65; green, 117; blue, 5 }  ,draw opacity=0.38 ][fill={rgb, 255:red, 65; green, 117; blue, 5 }  ,fill opacity=0.11 ] (47.07,4644.1) .. controls (47.07,4633.88) and (55.35,4625.6) .. (65.57,4625.6) -- (65.57,4625.6) .. controls (75.78,4625.6) and (84.07,4633.88) .. (84.07,4644.1) -- (84.07,4731.1) .. controls (84.07,4741.32) and (75.78,4749.6) .. (65.57,4749.6) -- (65.57,4749.6) .. controls (55.35,4749.6) and (47.07,4741.32) .. (47.07,4731.1) -- cycle ;
    \draw  [dash pattern={on 4.5pt off 4.5pt}]  (359.07,4585.17) -- (359.07,4872) ;
    \draw  [dash pattern={on 4.5pt off 4.5pt}]  (192.07,4585.17) -- (192.07,4872) ;
    \draw  [color={rgb, 255:red, 65; green, 117; blue, 5 }  ,draw opacity=0.38 ][fill={rgb, 255:red, 65; green, 117; blue, 5 }  ,fill opacity=0.11 ] (539,4643.1) .. controls (539,4632.88) and (547.28,4624.6) .. (557.5,4624.6) -- (557.5,4624.6) .. controls (567.72,4624.6) and (576,4632.88) .. (576,4643.1) -- (576,4730.1) .. controls (576,4740.32) and (567.72,4748.6) .. (557.5,4748.6) -- (557.5,4748.6) .. controls (547.28,4748.6) and (539,4740.32) .. (539,4730.1) -- cycle ;
    \draw  [dash pattern={on 4.5pt off 4.5pt}]  (525,4585.17) -- (525,4872) ;

    \draw    (324.47, 4699.6) circle [x radius= 13.73, y radius= 13.73]   ;
    \draw (324.47,4699.6) node   [align=left] {3};
    \draw  [color={rgb, 255:red, 74; green, 74; blue, 74 }  ,draw opacity=1 ]  (226.44, 4649.6) circle [x radius= 13.73, y radius= 13.73]   ;
    \draw (226.44,4649.6) node   [align=left] {\textcolor[rgb]{0.29,0.29,0.29}{1}};
    \draw    (324.44, 4849.6) circle [x radius= 13.73, y radius= 13.73]   ;
    \draw (324.44,4849.6) node   [align=left] {6};
    \draw    (324.44, 4799.6) circle [x radius= 13.73, y radius= 13.73]   ;
    \draw (324.44,4799.6) node   [align=left] {5};
    \draw    (324.44, 4749.6) circle [x radius= 13.73, y radius= 13.73]   ;
    \draw (324.44,4749.6) node   [align=left] {4};
    \draw    (324.44, 4649.6) circle [x radius= 13.73, y radius= 13.73]   ;
    \draw (324.44,4649.6) node   [align=left] {2};
    \draw    (324.44, 4599.6) circle [x radius= 13.73, y radius= 13.73]   ;
    \draw (324.44,4599.6) node   [align=left] {1};
    \draw  [color={rgb, 255:red, 74; green, 74; blue, 74 }  ,draw opacity=1 ]  (226.44, 4724.6) circle [x radius= 13.73, y radius= 13.73]   ;
    \draw (226.44,4724.6) node   [align=left] {\textcolor[rgb]{0.29,0.29,0.29}{2}};
    \draw  [color={rgb, 255:red, 74; green, 74; blue, 74 }  ,draw opacity=1 ]  (226.44, 4799.6) circle [x radius= 13.73, y radius= 13.73]   ;
    \draw (226.44,4799.6) node   [align=left] {\textcolor[rgb]{0.29,0.29,0.29}{3}};
    \draw    (491.47, 4699.6) circle [x radius= 13.73, y radius= 13.73]   ;
    \draw (491.47,4699.6) node   [align=left] {3};
    \draw  [color={rgb, 255:red, 74; green, 74; blue, 74 }  ,draw opacity=1 ]  (393.44, 4649.6) circle [x radius= 13.73, y radius= 13.73]   ;
    \draw (393.44,4649.6) node   [align=left] {\textcolor[rgb]{0.29,0.29,0.29}{1}};
    \draw    (491.44, 4849.6) circle [x radius= 13.73, y radius= 13.73]   ;
    \draw (491.44,4849.6) node   [align=left] {6};
    \draw    (491.44, 4799.6) circle [x radius= 13.73, y radius= 13.73]   ;
    \draw (491.44,4799.6) node   [align=left] {5};
    \draw    (491.44, 4749.6) circle [x radius= 13.73, y radius= 13.73]   ;
    \draw (491.44,4749.6) node   [align=left] {4};
    \draw    (491.44, 4649.6) circle [x radius= 13.73, y radius= 13.73]   ;
    \draw (491.44,4649.6) node   [align=left] {2};
    \draw    (491.44, 4599.6) circle [x radius= 13.73, y radius= 13.73]   ;
    \draw (491.44,4599.6) node   [align=left] {1};
    \draw  [color={rgb, 255:red, 74; green, 74; blue, 74 }  ,draw opacity=1 ]  (393.44, 4724.6) circle [x radius= 13.73, y radius= 13.73]   ;
    \draw (393.44,4724.6) node   [align=left] {\textcolor[rgb]{0.29,0.29,0.29}{2}};
    \draw  [color={rgb, 255:red, 74; green, 74; blue, 74 }  ,draw opacity=1 ]  (393.44, 4799.6) circle [x radius= 13.73, y radius= 13.73]   ;
    \draw (393.44,4799.6) node   [align=left] {\textcolor[rgb]{0.29,0.29,0.29}{3}};
    \draw    (163.47, 4699.6) circle [x radius= 13.73, y radius= 13.73]   ;
    \draw (163.47,4699.6) node   [align=left] {3};
    \draw  [color={rgb, 255:red, 74; green, 74; blue, 74 }  ,draw opacity=1 ]  (65.44, 4649.6) circle [x radius= 13.73, y radius= 13.73]   ;
    \draw (65.44,4649.6) node   [align=left] {\textcolor[rgb]{0.29,0.29,0.29}{1}};
    \draw    (163.44, 4849.6) circle [x radius= 13.73, y radius= 13.73]   ;
    \draw (163.44,4849.6) node   [align=left] {6};
    \draw    (163.44, 4799.6) circle [x radius= 13.73, y radius= 13.73]   ;
    \draw (163.44,4799.6) node   [align=left] {5};
    \draw    (163.44, 4749.6) circle [x radius= 13.73, y radius= 13.73]   ;
    \draw (163.44,4749.6) node   [align=left] {4};
    \draw    (163.44, 4649.6) circle [x radius= 13.73, y radius= 13.73]   ;
    \draw (163.44,4649.6) node   [align=left] {\textcolor[rgb]{0.29,0.29,0.29}{2}};
    \draw    (163.44, 4599.6) circle [x radius= 13.73, y radius= 13.73]   ;
    \draw (163.44,4599.6) node   [align=left] {1};
    \draw  [color={rgb, 255:red, 74; green, 74; blue, 74 }  ,draw opacity=1 ]  (65.44, 4724.6) circle [x radius= 13.73, y radius= 13.73]   ;
    \draw (65.44,4724.6) node   [align=left] {\textcolor[rgb]{0.29,0.29,0.29}{2}};
    \draw  [color={rgb, 255:red, 74; green, 74; blue, 74 }  ,draw opacity=1 ]  (65.44, 4799.6) circle [x radius= 13.73, y radius= 13.73]   ;
    \draw (65.44,4799.6) node   [align=left] {\textcolor[rgb]{0.29,0.29,0.29}{3}};
    \draw (78.23,4847.83) node [anchor=north west][inner sep=0.75pt]  [font=\small,color={rgb, 255:red, 74; green, 74; blue, 74 }  ,opacity=1 ] [align=left] {$\displaystyle ( 2,2,2)$};
    \draw (240.4,4848.8) node [anchor=north west][inner sep=0.75pt]  [font=\small,color={rgb, 255:red, 74; green, 74; blue, 74 }  ,opacity=1 ] [align=left] {$\displaystyle ( 2,3,2)$};
    \draw (407,4847.83) node [anchor=north west][inner sep=0.75pt]  [font=\small,color={rgb, 255:red, 74; green, 74; blue, 74 }  ,opacity=1 ] [align=left] {$\displaystyle ( 3,3,2)$};
    \draw (289.4,4561.6) node [anchor=north west][inner sep=0.75pt]  [font=\small,color={rgb, 255:red, 74; green, 74; blue, 74 }  ,opacity=1 ] [align=left] {$\displaystyle \text{Products}$};
    \draw (203.73,4562.27) node [anchor=north west][inner sep=0.75pt]  [font=\small,color={rgb, 255:red, 74; green, 74; blue, 74 }  ,opacity=1 ] [align=left] {$\displaystyle \text{Plants}$};
    \draw (123.4,4561.6) node [anchor=north west][inner sep=0.75pt]  [font=\small,color={rgb, 255:red, 74; green, 74; blue, 74 }  ,opacity=1 ] [align=left] {$\displaystyle \text{Products}$};
    \draw (35.73,4562.27) node [anchor=north west][inner sep=0.75pt]  [font=\small,color={rgb, 255:red, 74; green, 74; blue, 74 }  ,opacity=1 ] [align=left] {$\displaystyle \text{Plants}$};
    \draw (454,4561.6) node [anchor=north west][inner sep=0.75pt]  [font=\small,color={rgb, 255:red, 74; green, 74; blue, 74 }  ,opacity=1 ] [align=left] {$\displaystyle \text{Products}$};
    \draw (377.33,4562.27) node [anchor=north west][inner sep=0.75pt]  [font=\small,color={rgb, 255:red, 74; green, 74; blue, 74 }  ,opacity=1 ] [align=left] {$\displaystyle \text{Plants}$};
    \draw    (655.4, 4699.6) circle [x radius= 13.73, y radius= 13.73]   ;
    \draw (655.4,4699.6) node   [align=left] {3};
    \draw  [color={rgb, 255:red, 74; green, 74; blue, 74 }  ,draw opacity=1 ]  (557.37, 4649.6) circle [x radius= 13.73, y radius= 13.73]   ;
    \draw (557.37,4649.6) node   [align=left] {\textcolor[rgb]{0.29,0.29,0.29}{1}};
    \draw    (655.38, 4849.6) circle [x radius= 13.73, y radius= 13.73]   ;
    \draw (655.38,4849.6) node   [align=left] {6};
    \draw    (655.38, 4799.6) circle [x radius= 13.73, y radius= 13.73]   ;
    \draw (655.38,4799.6) node   [align=left] {5};
    \draw    (655.38, 4749.6) circle [x radius= 13.73, y radius= 13.73]   ;
    \draw (655.38,4749.6) node   [align=left] {4};
    \draw    (655.38, 4649.6) circle [x radius= 13.73, y radius= 13.73]   ;
    \draw (655.38,4649.6) node   [align=left] {2};
    \draw    (655.38, 4599.6) circle [x radius= 13.73, y radius= 13.73]   ;
    \draw (655.38,4599.6) node   [align=left] {1};
    \draw  [color={rgb, 255:red, 74; green, 74; blue, 74 }  ,draw opacity=1 ]  (557.38, 4724.6) circle [x radius= 13.73, y radius= 13.73]   ;
    \draw (557.38,4724.6) node   [align=left] {\textcolor[rgb]{0.29,0.29,0.29}{2}};
    \draw  [color={rgb, 255:red, 74; green, 74; blue, 74 }  ,draw opacity=1 ]  (557.38, 4799.6) circle [x radius= 13.73, y radius= 13.73]   ;
    \draw (557.38,4799.6) node   [align=left] {\textcolor[rgb]{0.29,0.29,0.29}{3}};
    \draw (570.94,4847.83) node [anchor=north west][inner sep=0.75pt]  [font=\small,color={rgb, 255:red, 74; green, 74; blue, 74 }  ,opacity=1 ] [align=left] {$\displaystyle ( 3,3,3)$};
    \draw (628.93,4561.6) node [anchor=north west][inner sep=0.75pt]  [font=\small,color={rgb, 255:red, 74; green, 74; blue, 74 }  ,opacity=1 ] [align=left] {$\displaystyle \text{Products}$};
    \draw (541.27,4562.27) node [anchor=north west][inner sep=0.75pt]  [font=\small,color={rgb, 255:red, 74; green, 74; blue, 74 }  ,opacity=1 ] [align=left] {$\displaystyle \text{Plants}$};
    \draw [color={rgb, 255:red, 47; green, 70; blue, 140 }  ,draw opacity=1 ]   (238.67,4643.36) -- (312.21,4605.84) ;
    \draw [color={rgb, 255:red, 47; green, 70; blue, 140 }  ,draw opacity=1 ]   (240.17,4649.6) -- (310.71,4649.6) ;
    \draw [color={rgb, 255:red, 47; green, 70; blue, 140 }  ,draw opacity=1 ]   (239.75,4721.21) -- (311.16,4703) ;
    \draw [color={rgb, 255:red, 47; green, 70; blue, 140 }  ,draw opacity=1 ]   (239.75,4728) -- (311.13,4746.21) ;
    \draw [color={rgb, 255:red, 47; green, 70; blue, 140 }  ,draw opacity=1 ]   (238.67,4805.84) -- (312.21,4843.36) ;
    \draw [color={rgb, 255:red, 47; green, 70; blue, 140 }  ,draw opacity=1 ]   (405.67,4643.36) -- (479.21,4605.84) ;
    \draw [color={rgb, 255:red, 47; green, 70; blue, 140 }  ,draw opacity=1 ]   (407.17,4649.6) -- (477.71,4649.6) ;
    \draw [color={rgb, 255:red, 47; green, 70; blue, 140 }  ,draw opacity=1 ]   (406.75,4728) -- (478.13,4746.21) ;
    \draw [color={rgb, 255:red, 47; green, 70; blue, 140 }  ,draw opacity=1 ]   (404.35,4732.95) -- (480.54,4791.26) ;
    \draw [color={rgb, 255:red, 47; green, 70; blue, 140 }  ,draw opacity=1 ]   (405.67,4805.84) -- (479.21,4843.36) ;
    \draw [color={rgb, 255:red, 47; green, 70; blue, 140 }  ,draw opacity=1 ]   (77.67,4643.36) -- (151.21,4605.84) ;
    \draw [color={rgb, 255:red, 47; green, 70; blue, 140 }  ,draw opacity=1 ]   (79.17,4649.6) -- (149.71,4649.6) ;
    \draw [color={rgb, 255:red, 47; green, 70; blue, 140 }  ,draw opacity=1 ]   (78.75,4721.21) -- (150.16,4703) ;
    \draw [color={rgb, 255:red, 47; green, 70; blue, 140 }  ,draw opacity=1 ]   (78.75,4728) -- (150.13,4746.21) ;
    \draw [color={rgb, 255:red, 47; green, 70; blue, 140 }  ,draw opacity=1 ]   (77.67,4805.84) -- (151.21,4843.36) ;
    \draw [color={rgb, 255:red, 47; green, 70; blue, 140 }  ,draw opacity=1 ]   (79.17,4799.6) -- (149.71,4799.6) ;
    \draw [color={rgb, 255:red, 47; green, 70; blue, 140 }  ,draw opacity=1 ]   (406.75,4721.21) -- (478.16,4703) ;
    \draw [color={rgb, 255:red, 47; green, 70; blue, 140 }  ,draw opacity=1 ]   (399.48,4661.93) -- (485.4,4837.27) ;
    \draw [color={rgb, 255:red, 47; green, 70; blue, 140 }  ,draw opacity=1 ]   (407.17,4799.6) -- (477.71,4799.6) ;
    \draw [color={rgb, 255:red, 47; green, 70; blue, 140 }  ,draw opacity=1 ]   (240.17,4799.6) -- (310.71,4799.6) ;
    \draw [color={rgb, 255:red, 47; green, 70; blue, 140 }  ,draw opacity=1 ]   (237.35,4732.95) -- (313.54,4791.26) ;
    \draw [color={rgb, 255:red, 47; green, 70; blue, 140 }  ,draw opacity=1 ]   (569.6,4643.36) -- (643.14,4605.84) ;
    \draw [color={rgb, 255:red, 47; green, 70; blue, 140 }  ,draw opacity=1 ]   (571.1,4649.6) -- (641.65,4649.6) ;
    \draw [color={rgb, 255:red, 47; green, 70; blue, 140 }  ,draw opacity=1 ]   (570.68,4728) -- (642.07,4746.21) ;
    \draw [color={rgb, 255:red, 47; green, 70; blue, 140 }  ,draw opacity=1 ]   (568.28,4732.95) -- (644.47,4791.26) ;
    \draw [color={rgb, 255:red, 47; green, 70; blue, 140 }  ,draw opacity=1 ]   (569.61,4805.84) -- (643.14,4843.36) ;
    \draw [color={rgb, 255:red, 47; green, 70; blue, 140 }  ,draw opacity=1 ]   (570.68,4721.21) -- (642.09,4703) ;
    \draw [color={rgb, 255:red, 47; green, 70; blue, 140 }  ,draw opacity=1 ]   (571.1,4799.6) -- (641.65,4799.6) ;
    \draw [color={rgb, 255:red, 47; green, 70; blue, 140 }  ,draw opacity=1 ]   (569.6,4655.84) -- (643.17,4693.36) ;
    \draw [color={rgb, 255:red, 47; green, 70; blue, 140 }  ,draw opacity=1 ]   (564.89,4788.11) -- (647.86,4661.1) ;

    \end{tikzpicture}

    \caption{Visual representation of the optimal flexibility designs from Table~\ref{tab:managerial_combined}.
    Tuples represent plants' node degrees.
    Green shaded plants are domestic.}%
    \label{fig:illustrative_example}%
\end{figure}

The exogenous-optimal design for the high-variability setting is highly flexible and has plant degrees $(3,3,3)$ (see Figure~\ref{fig:illustrative_example}) to hedge against the elevated supply risk.
However, once endogenous effects are introduced, the exogenous-optimal design becomes costly:
Plant~$3$ loses efficiency when serving three products.
Furthermore, products served by Plant~3 suffer from demand loss either in foreign or mixed-sourcing cases.
Therefore, even at small values for $\lambda$ and $\theta$, the model retreats to Design~$(3,3,2)$ and then to the Design~$(2,2,2)$ as $\lambda$ and $\theta$ increase.
Note that in this setting, Design~$(3,3,3)$ is never optimal over the tested parameter range.

In contrast to the low supply-variability setting, we observe that the OFG behavior is not monotone.
When the optimal design is $(3,3,2)$, the OFG relative to Design~$(3,3,3)$ tends to decrease as $\theta$ or $\lambda$ increases, whereas it increases once the optimal design changes to $(2,2,2)$.
Designs~$(3,3,2)$ and~$(3,3,3)$ both contain two products subject to mixed sourcing.
However, they have competing advantages:
Design~$(3,3,2)$ suffers from a smaller efficiency loss on Plant~$3$ and subjects one less product to foreign sourcing (Product~$6$), whereas Design~$(3,3,3)$ provides more flexibility and therefore more options to match the realized supply and demand under high uncertainty.
As $\theta$ increases, mixed-sourced products face higher demand variability.
Similarly, as $\lambda$ increases, high-degree plants lose more effective capacity.
Both effects deteriorate the performance of the mentioned designs, but they also make flexibility valuable: additional links help hedge against higher demand variability and allocate realized capacity across products more effectively.
Thus, as long as the optimal design remains $(3,3,2)$, the additional flexibility of Design~$(3,3,3)$ remains valuable and the relative advantage of Design~$(3,3,2)$ may decrease.
After the optimal design shifts to $(2,2,2)$ at higher values of $\lambda$ and $\theta$, the model limits flexibility to avoid high efficiency losses and small mixed-sourcing gains (both of which heavily penalize Design~$(3,3,3)$).
Hence, the OFG tends to increase.

Comparing the low- and high-variability settings, at any fixed level of endogenous effects ($\lambda$ and $\theta$) we see that the optimal design under the high-variability setting is at least as flexible as (often more flexible than) the low-variability setting.
For example, at $\theta = 15\%$ and $\lambda = 1\%$, the optimal design under the high-variability setting is $(3,3,2)$ while the optimal design under the low-variability setting is $(2,2,2)$.
In fact, we see that the thresholds at which the optimal design changes from $(3,3,2)$ to $(2,2,2)$ shift toward larger values of $\theta$ and $\lambda$ under high supply variability.

We conclude that the interaction between the problem's uncertainty level and the endogenous effects determines the optimal design in our example and neither effect should be ignored when modeling the problem. 
In addition, the optimal flexibility design under endogenous effects may be either more or less flexible than the exogenous-optimal design, depending on the supply-variability setting and the magnitude of endogenous effects.
Table~\ref{tab:managerial_combined} further illustrates thresholds of $\theta$ and $\lambda$ at which the optimal design shifts, providing insight into the robustness of the optimal design with respect to perturbations in the parameters governing the endogenous effects.

\begin{table}[t]
\centering
\scriptsize
\setlength{\tabcolsep}{3pt}
\renewcommand{\arraystretch}{1.15}
\newcommand{\cellres}[2]{\shortstack{#1\\[-1pt]{\scriptsize #2}}}
\caption{Optimal flexibility designs and objective function gaps relative to the exogenous-optimal designs under low and high supply variability settings. A dash indicates that the optimal design has the same node degrees as the exogenous-optimal design.}
\label{tab:managerial_combined}
\begin{tabular}{rcccccc@{\hspace{10pt}}cccccc}
\toprule
&
\multicolumn{6}{c}{Low supply variability}
&
\multicolumn{6}{c}{High supply variability} \\
\cmidrule(lr){2-7}
\cmidrule(lr){8-13}
& \multicolumn{6}{c}{$\lambda$}
& \multicolumn{6}{c}{$\lambda$} \\
\cmidrule(lr){2-7}
\cmidrule(lr){8-13}
$\theta$ & 1\% & 2\% & 3\% & 4\% & 5\% & 6\%
& 1\% & 2\% & 3\% & 4\% & 5\% & 6\% \\
\midrule

7\%
&
\cIII{\cellres{$(3,3,2)$}{$3.20\%$}}
&
\cIII{\cellres{$(3,3,2)$}{$2.67\%$}}
&
\cIII{\cellres{$(3,3,2)$}{$2.06\%$}}
&
\cIII{\cellres{$(3,3,2)$}{$1.40\%$}}
&
\cIII{\cellres{$(3,3,2)$}{$0.67\%$}}
&
\cII{\cellres{$(2,3,2)$}{$0.02\%$}}
&
\cIII{\cellres{$(3,3,2)$}{$1.61\%$}}
&
\cIII{\cellres{$(3,3,2)$}{$1.51\%$}}
&
\cIII{\cellres{$(3,3,2)$}{$1.43\%$}}
&
\cIII{\cellres{$(3,3,2)$}{$1.39\%$}}
&
\cIII{\cellres{$(3,3,2)$}{$1.38\%$}}
&
\cIII{\cellres{$(3,3,2)$}{$1.41\%$}}
\\[4pt]

9\%
&
\cIII{\cellres{$(3,3,2)$}{$2.38\%$}}
&
\cIII{\cellres{$(3,3,2)$}{$1.85\%$}}
&
\cIII{\cellres{$(3,3,2)$}{$1.27\%$}}
&
\cIII{\cellres{$(3,3,2)$}{$0.62\%$}}
&
\cII{\cellres{$(2,3,2)$}{$0.00\%$}}
&
\cI{\cellres{$(2,2,2)$}{--}}
&
\cIII{\cellres{$(3,3,2)$}{$1.53\%$}}
&
\cIII{\cellres{$(3,3,2)$}{$1.42\%$}}
&
\cIII{\cellres{$(3,3,2)$}{$1.33\%$}}
&
\cIII{\cellres{$(3,3,2)$}{$1.27\%$}}
&
\cIII{\cellres{$(3,3,2)$}{$1.24\%$}}
&
\cIII{\cellres{$(3,3,2)$}{$1.25\%$}}
\\[4pt]

11\%
&
\cIII{\cellres{$(3,3,2)$}{$1.54\%$}}
&
\cIII{\cellres{$(3,3,2)$}{$1.03\%$}}
&
\cIII{\cellres{$(3,3,2)$}{$0.45\%$}}
&
\cI{\cellres{$(2,2,2)$}{--}}
&
\cI{\cellres{$(2,2,2)$}{--}}
&
\cI{\cellres{$(2,2,2)$}{--}}
&
\cIII{\cellres{$(3,3,2)$}{$1.45\%$}}
&
\cIII{\cellres{$(3,3,2)$}{$1.33\%$}}
&
\cIII{\cellres{$(3,3,2)$}{$1.23\%$}}
&
\cIII{\cellres{$(3,3,2)$}{$1.15\%$}}
&
\cIII{\cellres{$(3,3,2)$}{$1.10\%$}}
&
\cI{\cellres{$(2,2,2)$}{$1.43\%$}}
\\[4pt]

13\%
&
\cIII{\cellres{$(3,3,2)$}{$0.68\%$}}
&
\cIII{\cellres{$(3,3,2)$}{$0.18\%$}}
&
\cI{\cellres{$(2,2,2)$}{--}}
&
\cI{\cellres{$(2,2,2)$}{--}}
&
\cI{\cellres{$(2,2,2)$}{--}}
&
\cI{\cellres{$(2,2,2)$}{--}}
&
\cIII{\cellres{$(3,3,2)$}{$1.37\%$}}
&
\cIII{\cellres{$(3,3,2)$}{$1.24\%$}}
&
\cIII{\cellres{$(3,3,2)$}{$1.12\%$}}
&
\cIII{\cellres{$(3,3,2)$}{$1.03\%$}}
&
\cI{\cellres{$(2,2,2)$}{$1.46\%$}}
&
\cI{\cellres{$(2,2,2)$}{$2.11\%$}}
\\[4pt]

15\%
&
\cI{\cellres{$(2,2,2)$}{--}}
&
\cI{\cellres{$(2,2,2)$}{--}}
&
\cI{\cellres{$(2,2,2)$}{--}}
&
\cI{\cellres{$(2,2,2)$}{--}}
&
\cI{\cellres{$(2,2,2)$}{--}}
&
\cI{\cellres{$(2,2,2)$}{--}}
&
\cIII{\cellres{$(3,3,2)$}{$1.29\%$}}
&
\cIII{\cellres{$(3,3,2)$}{$1.14\%$}}
&
\cI{\cellres{$(2,2,2)$}{$1.18\%$}}
&
\cI{\cellres{$(2,2,2)$}{$1.64\%$}}
&
\cI{\cellres{$(2,2,2)$}{$2.18\%$}}
&
\cI{\cellres{$(2,2,2)$}{$2.82\%$}}
\\[4pt]

\bottomrule
\end{tabular}
\end{table}

%% file: conclusion.tex
\section{Conclusion}\label{sec:conclusion}
This paper studied the supply network flexibility design problem under the assumption that the network design affects the supply and demand uncertainties.
We modeled the problem as a two-stage stochastic program with endogenous uncertainty, wherein the second-stage supply and demand uncertainties are functions of the first-stage design.
Specifically, we modeled supply uncertainty as a function of the number of products assigned to a plant to capture the efficiency loss (due to changeovers and product variety) and the specialization effect.
Furthermore, we modeled the impact of the sourcing locations of a product on its demand uncertainty, to reflect the country-of-origin and tariff effects.

Notably, our modeling framework does not rely on the bipartite structure of the network design problem.
Moreover, it does not hinge on a particular functional form or parameterization of the endogenous supply and demand mechanisms.
These features suggest that the proposed modeling framework is applicable to a broader class of network design problems with endogenous uncertainty, wherein the supply or demand distribution at a non-transshipment node depends on its degree or zone assignment.

To solve the problem efficiently, we implemented the L-shaped method and derived distribution-specific optimality cuts as a baseline approach.
Additionally, we proposed new distribution-free formulations that impose valid upper bounds on the expected second-stage profit for all distributions and subsets of supply and demand distributions.
Extensive experiments confirmed that our distribution-free formulations substantially improve the performance of the baseline approach by reducing the number of visited distributions.
We further explained how their performance depends on the problem parameters.
The idea behind our distribution-free formulations, i.e., embedding the second-stage formulation in the relaxed master problem using a dominant scenario, is not confined to our problem setting and remains viable as long as the second-stage formulation admits a small formulation and the dominant scenario can be identified.
Our distribution-free formulations currently exploit only the first-moment information of all possible distributions.
An intriguing direction for future research is to derive bounds incorporating higher-moment information and distributional assumptions.

%% file: appendix_tables.tex
\clearpage

\section{Deferred Proofs from Section~\ref{sec:solution_approach}}\label{sec:appendix_proofs}
\begin{proposition}\label{prop:optimality_cuts}
    Let $(y^*, \mu^*, w^*, u^*)$ be an optimal solution to the~\eqref{eq:rmp} and $\mu^* > \mathbb{E}_{\Xi \sim \mathbb{P}_{d,h}}[f(y^*, \Xi)]$ where $d$ and $h$ correspond to $w^*$ and $u^*$, respectively.
    Then, the optimality cut
    \begin{align*}
    \mu \le \ &\left[ 
        \sum_{i \in I} (1 - w_{i,d_i}) + \sum_{j \in J} \left(\sum  \nolimits_{z \in h(j)} ( 1-u_{j,z}) + \sum \nolimits_{z \in \mathcal{Z} \setminus h(j)} u_{j,z} \right) 
    \right] \cdot U \nonumber \\
    &+ \frac{1}{|S_{d,h}|} \sum_{\xi \in S_{d,h}} \left( \sum_{i \in I} \xi_{i}^{c} \; \alpha_{i}^{*, \xi} + \sum_{j \in J} \xi_{j}^{d} \; \beta_j^{*, \xi} + \sum_{(i,j) \in I \times J} M_{i,j}^\xi  \; \rho_{i,j}^{*, \xi} \; y_{i,j} \right),
\end{align*}
    is violated by $(y^*, \mu^*, w^*, u^*)$ and is valid for Problem~\eqref{eq:model}.
\end{proposition}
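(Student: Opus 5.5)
The proof splits into two claims: the cut is violated by $(y^*,\mu^*,w^*,u^*)$, and it is valid for Problem~\eqref{eq:model}. In both, the duals $(\alpha^{*,\xi},\beta^{*,\xi},\rho^{*,\xi})$ are taken as optimal vertices of $\mathcal{T}$ for Problem~\eqref{eq:dual} at $y=y^*$ and $\xi \in S_{d,h}$, exactly as in Step~6 of Algorithm~\ref{alg:decomposition}. Throughout, the expectation $\mathbb{E}_{\Xi\sim\mathbb{P}_{d,h}}$ is read as the sample average $F(y,\mathbb{P}_{d,h})$ over $S_{d,h}$, consistent with the sample average approximation used to define Problem~\eqref{eq:model}.

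\emph{Violation.} Since $d$ and $h$ are read off from $w^*$ and $u^*$, we have $w^*_{i,d_i}=1$ for every $i\in I$. We also have $u^*_{j,z}=1$ exactly for $z\in h(j)$, provided $h$ is defined as the support of $u^*$. Hence the bracketed coefficient of $U$ is zero at the incumbent. By LP strong duality for the always feasible and bounded second-stage problem, each summand equals $f(y^*,\xi)$, because the chosen duals are optimal for Problem~\eqref{eq:dual} at $y^*$. The right-hand side therefore equals $F(y^*,\mathbb{P}_{d,h})$, which is strictly less than $\mu^*$ by hypothesis, so the cut is violated.

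\emph{Validity.} Take any feasible $(y,\mu,w,u)$ of Problem~\eqref{eq:model} and split into two cases.

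If $(w,u)$ encodes the same pair $(d,h)$, the $U$-term vanishes. Constraints~\eqref{eq:combined_nogood_cuts} then give $\mu \le F(y,\mathbb{P}_{d,h})$. For each $\xi$, the fixed vertex $(\alpha^{*,\xi},\beta^{*,\xi},\rho^{*,\xi})\in\mathcal{T}$ is feasible for Problem~\eqref{eq:dual} at the new $y$, because $\mathcal{T}$ does not depend on $y$ or $\xi$. Weak duality then yields $f(y,\xi)\le \sum_i \xi^c_i\alpha^{*,\xi}_i+\sum_j\xi^d_j\beta^{*,\xi}_j+\sum_{i,j}M^\xi_{i,j}\rho^{*,\xi}_{i,j}y_{i,j}$. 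Averaging over $S_{d,h}$ gives the cut. Equivalently, the cut is one member of the family~\eqref{eq:final_benders_cuts}, which we showed is equivalent to~\eqref{eq:combined_nogood_cuts}.

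If $(w,u)$ encodes a different pair, the bracket is a sum of nonnegative terms on binary variables, and at least one term is at least $1$. Constraints~\eqref{eq:endo_supply_b} and~\eqref{eq:link_y_u} guarantee that $w$ and $u$ encode a well-defined $(d',h')$, and any mismatch gives $w_{i,d_i}=0$ or a flipped $u_{j,z}$. The dual term is nonnegative since $\xi\ge 0$, all dual variables are nonnegative, and $M^\xi\ge 0$. Hence the right-hand side is at least $U$.

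The remaining task is $\mu \le U$ for every feasible solution. This holds because the constraint~\eqref{eq:combined_nogood_cuts} for the true pair $(d',h')$ gives $\mu \le F(y,\mathbb{P}_{d',h'})$. Proposition~\ref{prop:jensen} and Corollary~\ref{cor:U} then bound this by $\sum_{i,j}q_{i,j}\max_p\min(\cdot)\, y_{i,j}\le U$.

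The main obstacle is this last step. Corollary~\ref{cor:U} is stated for the true expectation, whereas the model uses the sample average $F$, and Jensen's bound need not hold for a finite sample. I would handle this by interpreting $\mathbb{E}$ as the empirical distribution on $S_{d',h'}$, so that Jensen applies to the empirical means, or by assuming $U$ is chosen large enough to dominate all sample averages, as Corollary~\ref{cor:U} intends. Everything else reduces to strong and weak LP duality together with the binary encoding of $(d,h)$.
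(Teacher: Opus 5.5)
Your violation argument is the paper's. You also make explicit a step the paper leaves implicit: the $U$-bracket vanishes at the incumbent because $(w^*,u^*)$ encode exactly $(d,h)$.

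For validity, your case split and the ``main obstacle'' are unnecessary. The paper argues in one line that the cut is a member of~\eqref{eq:final_benders_cuts}, which is your ``Equivalently'' sentence, and that argument already covers your second case.

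\begin{itemize}
\item The constraint~\eqref{eq:combined_nogood_cuts} indexed by the fixed pair $(d,h)$ holds at every feasible point of Problem~\eqref{eq:model}, whichever pair $(w,u)$ encodes.
\item $F(y,\mathbb{P}_{d,h})$ is defined for every $y$.
\item Since $\mathcal{T}$ depends on neither $y$ nor $\xi$, each fixed vertex $(\alpha^{*,\xi},\beta^{*,\xi},\rho^{*,\xi})$ is dual feasible at any $y$. Weak duality therefore bounds $f(y,\xi)$ by the corresponding dual term for each $\xi\in S_{d,h}$.
\item Averaging over $S_{d,h}$ gives the cut, with the bracket times $U$ carried along unchanged.
\end{itemize}

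This is exactly your case-1 argument applied verbatim. It needs no bound $\mu\le U$, no nonnegativity of the dual term, and no property of $U$ at all.

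The size of $U$ matters only for whether Problem~\eqref{eq:model} correctly reformulates the sample-average problem, that is, whether the mismatched constraints~\eqref{eq:combined_nogood_cuts} cut off genuine solutions. Your remark that the $U$ of Corollary~\ref{cor:U}, built from true means, need not dominate sample averages is a fair point about that reformulation step. It is irrelevant, however, to validity relative to Problem~\eqref{eq:model}.

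So your case-2 route proves the claim only under an extra assumption on $U$, whereas the paper's argument, which you already wrote down, proves it unconditionally.
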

\begin{proof}{Proof.}
    By assumption, we have
    \begin{align*}
        \mu^* &> \mathbb{E}_{\Xi \sim \mathbb{P}_{d,h}}[f(y^*, \Xi)] = \frac{1}{|S_{d,h}|} \sum_{\xi \in S_{d,h}} f(y^*, \xi) \\
        &= \frac{1}{|S_{d,h}|} \sum_{\xi \in S_{d,h}} \left( \sum_{i \in I} \xi_{i}^{c} \; \alpha_{i}^{*, \xi} + \sum_{j \in J} \xi_{j}^{d} \; \beta_j^{*, \xi} + \sum_{(i,j) \in I \times J} M_{i,j}^\xi  \; \rho_{i,j}^{*, \xi} \; y^*_{i,j} \right),
    \end{align*}
    where the second equality follows from Problem~\eqref{eq:dual}.
    Hence, the optimality cut is violated by $(y^*, \mu^*, w^*, u^*)$.
    The validity of the cut follows from the validity of~\eqref{eq:final_benders_cuts} for Problem~\eqref{eq:model}.\qedhere
\end{proof}

\begin{proposition}\label{prop:convergence}
    Algorithm~\ref{alg:decomposition} converges to an optimal solution of Problem~\eqref{eq:model} in a finite number of iterations.
\end{proposition}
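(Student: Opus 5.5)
The argument has two parts: finiteness, by showing that no optimality cut can be generated twice, and optimality, by showing that the returned design is optimal for Problem~\eqref{eq:model} because \eqref{eq:rmp} is a relaxation of it.

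\textbf{Finiteness.} Every cut added in Step~10 is a member of the family \eqref{eq:final_benders_cuts}. Each such member is indexed by a pair $(d,h)$ with $d \in \{0,\dots,|J|\}^I$ and $h: J \to 2^{\mathcal{Z}}$, together with a tuple $(\alpha^\xi,\beta^\xi,\rho^\xi)_{\xi \in S_{d,h}} \in \vertices(\mathcal{T})^{S_{d,h}}$. The dual in Problem~\eqref{eq:dual} can be solved to an optimal vertex, since $\mathcal{T}$ is a pointed polyhedron (it lies in the nonnegative orthant) and the LP is finite. Hence the number of possible cuts is at most
\[
(|J|+1)^{|I|}\cdot 2^{|J||\mathcal{Z}|}\cdot \max_{d,h}|\vertices(\mathcal{T})|^{|S_{d,h}|},
\]
which is finite.

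\textbf{No cut repeats.} Suppose the cut for $(d,h,\alpha^*,\beta^*,\rho^*)$ was added in an earlier iteration, and the current optimum $(y^*,\mu^*,w^*,u^*)$ again induces $(d,h)$ and the same dual vertices. Since $(y^*,\mu^*,w^*,u^*)$ is feasible for the current \eqref{eq:rmp}, it satisfies that cut. The coefficient of $U$ in the cut vanishes because $w^*_{i,d_i}=1$ and $u^*$ matches $h$, so the cut reduces to the Step~7 inequality. Therefore Step~7 holds and the algorithm terminates. Consequently, each non-terminating iteration adds a cut not added before, which by Proposition~\ref{prop:optimality_cuts} cuts off the current solution. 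Combined with the finite count above, the algorithm stops after finitely many iterations.

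\textbf{Optimality.} I see two ways to establish the validity of the inequalities in \eqref{eq:rmp}, and either suffices. First, \eqref{eq:rmp} only ever contains inequalities valid for Problem~\eqref{eq:model}: the added optimality cuts are valid by Proposition~\ref{prop:optimality_cuts}. Second, when strengthened versions are used, \eqref{eq:jensen}, \eqref{eq:dominant_scenario}, \eqref{eq:dff}, \eqref{eq:dffs} and \eqref{eq:dffd} are valid by Propositions~\ref{prop:jensen} and~\ref{prop:dominant_scenario}, the discussion of the dominant scenario in Section~\ref{sec:strengthening}, and Corollary~\ref{cor:U}. Either way, \eqref{eq:rmp} is a relaxation of Problem~\eqref{eq:model}, so its optimal value is an upper bound on the optimum of Problem~\eqref{eq:model}.

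At termination, Step~7 gives
\[
\mu^* \le \frac{1}{|S_{d,h}|}\sum_{\xi \in S_{d,h}} f(y^*,\xi) = F(y^*,\mathbb{P}_{d,h}),
\]
by strong duality in Problem~\eqref{eq:dual}. We must also check $y^*$ against all constraints of Problem~\eqref{eq:model}. For any other pair $(d',h') \neq (d,h)$, the coefficient of $U$ in \eqref{eq:combined_nogood_cuts} is at least $1$. Since $F \ge 0$ and $\mu^* \le F(y^*,\mathbb{P}_{d,h}) \le U$ by Corollary~\ref{cor:U}, those constraints are satisfied. So $(y^*,\mu^*,w^*,u^*)$ is feasible for Problem~\eqref{eq:model} and attains the relaxation bound, hence it is optimal.

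The main obstacle is the subtle point in the optimality step: whether $F(y^*,\mathbb{P}_{d,h}) \le U$ holds on the sampled scenarios. Corollary~\ref{cor:U} bounds only expectations, so I would argue this directly: each $f(y,\xi) \le \sum_{i,j} q_{i,j} M^\xi_{i,j}$, and a sample average of these need not be bounded by the $U$ defined through means. If that fails, I would instead take $U$ large enough to bound all sample averages (as it was originally described, a "sufficiently large number"). This is the step where I expect care to be needed.
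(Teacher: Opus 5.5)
Your proposal is correct and follows the paper's proof. The number of distinct cuts is bounded by the finitely many pairs $(d,h)$ times the finitely many tuples in $\vertices(\mathcal{T})^{S_{d,h}}$, and correctness comes from Proposition~\ref{prop:optimality_cuts} together with strong duality in Problem~\eqref{eq:dual}. Beyond the paper, you make explicit that a freshly generated cut can never already be in \eqref{eq:rmp}, and that the terminal solution also satisfies the constraints of Problem~\eqref{eq:model} for every other $(d',h')$, which is guaranteed once $U \ge \max_y F(y,\mathbb{P}_y)$ on the sampled scenarios. Your caveat that Corollary~\ref{cor:U} bounds only true expectations is well taken, and the same assumption also justifies imposing $\mu \le U$ so that the initial \eqref{eq:rmp}, in which $\mu$ is otherwise unbounded, has an optimum.
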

\begin{proof}{Proof.}
    For each distribution $\mathbb{P}_{d,h}$, the total number of optimality cuts that can be added to~\eqref{eq:rmp} is at most $|\vertices(\mathcal{T})|^{|S_{d,h}|}$.
    Furthermore, the number of possible distributions $\mathbb{P}_{d,h}$ is finite, hence Algorithm~\ref{alg:decomposition} terminates in finitely many iterations.
    The correctness follows from Proposition~\ref{prop:optimality_cuts} and the definition of $(\alpha^*, \beta^*, \rho^*)$ at Step~6 of Algorithm~\ref{alg:decomposition}.\qedhere
\end{proof}

\section{Exhaustive Distribution Enumeration Approach: Mathematical Model}\label{sec:appendix_model}

In each iteration of the EDE approach, the following standard two-stage stochastic program is solved for a fixed distribution $\mathbb{P}_{d,h}$:
\begin{alignat*}{10}
    \max_{y,\mu} \, & \ \mu - \sum_{i\in I}\sum_{j\in J} s_{i,j}\, y_{i,j} \nonumber \\
    \text{s.t.} \,  & \mu \le \frac{1}{|S_{d,h}|} \sum_{\xi \in S_{d,h}} \Big( \sum_{i \in I} \xi_{i}^{c} \ \alpha_{i}^{\xi} + \sum_{j \in J} \xi_{j}^{d} \ \beta_j^{\xi} + \sum_{(i,j) \in I \times J} M_{i,j} & & \rho_{i,j}^{\xi} \ y_{i,j} \Big) \\
    &&& \forall (\alpha, \beta, \rho) \in \vertices(\mathcal{T})^{S_{d,h}}, \\
    & \sum_{j \in J} y_{i,j} = d_{i}
    && \forall i \in I, \\
    & \sum_{i \in z} y_{i,j} \ge 1
    && \forall j \in J,\, z\in h(j), \\
    & y_{i,j} = 0
    && \forall i \in I, j\in J: i\notin\bigcup_{z\in h(j)} z,\\
    & y \in \{0,1\}^{I\times J}.
\end{alignat*}

\clearpage

\section{Tables~\ref{tab:both}, \ref{tab:stress_demand}, \ref{tab:dem}, and \ref{tab:sup}}\label{sec:appendix_tables}

\small{Not all combinations of node degrees and zone assignments correspond to $y$ ($|\mathcal{P}| \leq (|J|+1)^{|I|} \times 2^{2 |J|}$).
For solvable instances, the VD metric of Approach~(i) is the average $|\mathcal{P}|$ over 10 instances.}

\begin{table}[H]
\centering
\scriptsize
\caption{Performance summary of Approaches~(i)--(iv) under Regime~(1). For each instance size, 10 random instances are solved and average values are reported. TL indicates that the time limit was reached.
}
\label{tab:both}
\begin{tabular}{crlrrrrrr}
\toprule
Size & App. & Method & Gap & Itr. & Time & VD & RVSD & RVDD \\
\midrule
\multirow{4}{*}{(2,2)} & (i) & baseline & 0.00 & 17 & 0.29 & 16 & 1.00 & 1.00 \\
 & (ii) & \eqref{eq:jensen}, \eqref{eq:dominant_scenario} & 0.00 & 11 & 0.18 & 12 & 0.80 & 0.76 \\
 & (iii) & \eqref{eq:dff} & 0.00 & 7 & 0.12 & 8 & 0.56 & 0.50 \\
 & (iv) & \eqref{eq:dff}, \eqref{eq:dffs}, \eqref{eq:dffd} & 0.00 & 5 & 0.11 & 6 & 0.42 & 0.36 \\
\midrule
\multirow{4}{*}{(2,3)} & (i) & baseline & 0.00 & 65 & 1.07 & 64 & 1.00 & 1.00 \\
 & (ii) & \eqref{eq:jensen}, \eqref{eq:dominant_scenario} & 0.00 & 29 & 0.51 & 30 & 0.62 & 0.47 \\
 & (iii) & \eqref{eq:dff} & 0.00 & 26 & 0.49 & 27 & 0.57 & 0.42 \\
 & (iv) & \eqref{eq:dff}, \eqref{eq:dffs}, \eqref{eq:dffd} & 0.00 & 19 & 0.40 & 20 & 0.46 & 0.32 \\
\midrule
\multirow{4}{*}{(2,4)} & (i) & baseline & 0.00 & 257 & 4.71 & 256 & 1.00 & 1.00 \\
 & (ii) & \eqref{eq:jensen}, \eqref{eq:dominant_scenario} & 0.00 & 77 & 1.48 & 78 & 0.54 & 0.30 \\
 & (iii) & \eqref{eq:dff} & 0.00 & 68 & 1.35 & 69 & 0.48 & 0.27 \\
 & (iv) & \eqref{eq:dff}, \eqref{eq:dffs}, \eqref{eq:dffd} & 0.00 & 54 & 1.13 & 56 & 0.39 & 0.22 \\
\midrule
\multirow{4}{*}{(3,3)} & (i) & baseline & 0.00 & 352 & 6.75 & 304 & 1.00 & 1.00 \\
 & (ii) & \eqref{eq:jensen}, \eqref{eq:dominant_scenario} & 0.00 & 137 & 2.74 & 100 & 0.46 & 0.44 \\
 & (iii) & \eqref{eq:dff} & 0.00 & 114 & 2.28 & 82 & 0.29 & 0.31 \\
 & (iv) & \eqref{eq:dff}, \eqref{eq:dffs}, \eqref{eq:dffd} & 0.00 & 69 & 1.46 & 47 & 0.17 & 0.25 \\
\midrule
\multirow{4}{*}{(3,4)} & (i) & baseline & 0.00 & 2045 & 46.78 & 1664 & 1.00 & 1.00 \\
 & (ii) & \eqref{eq:jensen}, \eqref{eq:dominant_scenario} & 0.00 & 656 & 15.40 & 406 & 0.41 & 0.30 \\
 & (iii) & \eqref{eq:dff} & 0.00 & 633 & 14.82 & 387 & 0.33 & 0.25 \\
 & (iv) & \eqref{eq:dff}, \eqref{eq:dffs}, \eqref{eq:dffd} & 0.00 & 347 & 8.46 & 204 & 0.23 & 0.20 \\
\midrule
\multirow{4}{*}{(3,5)} & (i) & baseline & 0.00 & 10341 & 284.36 & 8704 & 1.00 & 1.00 \\
 & (ii) & \eqref{eq:jensen}, \eqref{eq:dominant_scenario} & 0.00 & 3407 & 92.96 & 2095 & 0.46 & 0.29 \\
 & (iii) & \eqref{eq:dff} & 0.00 & 3361 & 91.78 & 2053 & 0.41 & 0.27 \\
 & (iv) & \eqref{eq:dff}, \eqref{eq:dffs}, \eqref{eq:dffd} & 0.00 & 2216 & 61.77 & 1209 & 0.29 & 0.15 \\
\midrule
\multirow{4}{*}{(3,6)} & (i) & baseline & 2.86 & 46702 & TL & 40649 & 0.97 & 1.00 \\
 & (ii) & \eqref{eq:jensen}, \eqref{eq:dominant_scenario} & 0.00 & 12001 & 396.73 & 6139 & 0.32 & 0.17 \\
 & (iii) & \eqref{eq:dff} & 0.00 & 11942 & 398.07 & 6094 & 0.28 & 0.16 \\
 & (iv) & \eqref{eq:dff}, \eqref{eq:dffs}, \eqref{eq:dffd} & 0.00 & 7010 & 231.18 & 2975 & 0.18 & 0.10 \\
\midrule
\multirow{4}{*}{(4,4)} & (i) & baseline & 0.00 & 11586 & 341.08 & 9613 & 1.00 & 1.00 \\
 & (ii) & \eqref{eq:jensen}, \eqref{eq:dominant_scenario} & 0.00 & 2638 & 74.81 & 1437 & 0.17 & 0.32 \\
 & (iii) & \eqref{eq:dff} & 0.00 & 2552 & 71.93 & 1393 & 0.13 & 0.24 \\
 & (iv) & \eqref{eq:dff}, \eqref{eq:dffs}, \eqref{eq:dffd} & 0.00 & 959 & 27.60 & 490 & 0.06 & 0.19 \\
\midrule
\multirow{4}{*}{(4,5)} & (i) & baseline & 4.56 & 45042 & TL & 39935 & 0.78 & 1.00 \\
 & (ii) & \eqref{eq:jensen}, \eqref{eq:dominant_scenario} & 0.00 & 18260 & 769.40 & 10272 & 0.21 & 0.26 \\
 & (iii) & \eqref{eq:dff} & 0.00 & 18256 & 818.61 & 10228 & 0.19 & 0.24 \\
 & (iv) & \eqref{eq:dff}, \eqref{eq:dffs}, \eqref{eq:dffd} & 0.00 & 8006 & 324.11 & 4041 & 0.11 & 0.18 \\
\midrule
\multirow{4}{*}{(4,6)} & (i) & baseline & 4.80 & 42516 & TL & 41636 & 0.42 & 0.88 \\
 & (ii) & \eqref{eq:jensen}, \eqref{eq:dominant_scenario} & 0.17 & 37736 & TL & 20732 & 0.24 & 0.24 \\
 & (iii) & \eqref{eq:dff} & 0.16 & 37248 & TL & 19849 & 0.22 & 0.23 \\
 & (iv) & \eqref{eq:dff}, \eqref{eq:dffs}, \eqref{eq:dffd} & 0.10 & 32375 & 1693.17 & 16263 & 0.16 & 0.15 \\
\midrule
\multirow{4}{*}{(4,7)} & (i) & baseline & 4.92 & 39871 & TL & 39820 & 0.21 & 0.47 \\
 & (ii) & \eqref{eq:jensen}, \eqref{eq:dominant_scenario} & 0.22 & 36495 & TL & 25705 & 0.17 & 0.18 \\
 & (iii) & \eqref{eq:dff} & 0.22 & 36094 & TL & 26267 & 0.18 & 0.18 \\
 & (iv) & \eqref{eq:dff}, \eqref{eq:dffs}, \eqref{eq:dffd} & 0.16 & 32911 & TL & 20871 & 0.13 & 0.13 \\
\midrule
\multirow{4}{*}{(4,8)} & (i) & baseline & 5.46 & 36751 & TL & 36734 & 0.12 & 0.16 \\
 & (ii) & \eqref{eq:jensen}, \eqref{eq:dominant_scenario} & 0.26 & 32799 & TL & 27332 & 0.12 & 0.10 \\
 & (iii) & \eqref{eq:dff} & 0.27 & 33012 & TL & 28165 & 0.15 & 0.09 \\
 & (iv) & \eqref{eq:dff}, \eqref{eq:dffs}, \eqref{eq:dffd} & 0.21 & 30226 & TL & 24705 & 0.11 & 0.07 \\
\bottomrule
\end{tabular}
\end{table}

\begin{table}[h!]
\centering
\scriptsize
\caption{Performance summary of Approaches~(iii)--(vi) under Regime~(1).
For the 10 instances of size $(3,6)$, mean product demands increase from $100$ to $180$ units, while keeping demand CV and all other parameters unchanged.
Average values over 10 instances are reported.}
\label{tab:stress_demand}
\begin{tabular}{crlrrrrrr}
\toprule
Demand & App. & Formulation & Gap & Itr. & Time & VD & RVSD & RVDD \\
\midrule
\multirow{4}{*}{100}
 & (iii) & \eqref{eq:dff} & 0.00 & 11942 & 398.07 & 6094 & 0.28 & 0.16 \\
 & (iv) & \eqref{eq:dff}, \eqref{eq:dffs}, \eqref{eq:dffd} & 0.00 & 7010 & 231.18 & 2975 & 0.18 & 0.10 \\
 &(v) & \eqref{eq:dff}, \eqref{eq:dffd} & 0.00 & 7240 & 238.06 & 3140 &	0.19 & 0.10 \\
 &(vi) & \eqref{eq:dff}, \eqref{eq:dffs} & 0.00 & 11143	& 378.03 &	5566 & 0.25 & 0.15 \\
\midrule
\multirow{4}{*}{110}
 &(iii) & \eqref{eq:dff} & 0.00 & 12911 & 448.35 & 6722 & 0.30 & 0.18\\
 &(iv) & \eqref{eq:dff}, \eqref{eq:dffs}, \eqref{eq:dffd} & 0.00 & 9147 & 314.46 & 3813 & 0.21 & 0.12 \\
 &(v) & \eqref{eq:dff}, \eqref{eq:dffd} & 0.00 & 9309 & 318.50 & 3928 &	0.22 & 0.12\\
 &(vi) & \eqref{eq:dff}, \eqref{eq:dffs} & 0.00 & 12425 & 441.05 & 6355 & 0.28 & 0.18 \\
\midrule
\multirow{4}{*}{120}
 &(iii) & \eqref{eq:dff} & 0.00 & 15004 & 538.44 & 8091 & 0.33 & 0.24 \\
 &(iv) & \eqref{eq:dff}, \eqref{eq:dffs}, \eqref{eq:dffd} & 0.00 & 11061 & 391.66 &	4786 & 0.24 & 0.13 \\
 &(v) & \eqref{eq:dff}, \eqref{eq:dffd} & 0.00 & 11402 & 402.53 & 5078 & 0.27 &	0.13 \\
 &(vi) & \eqref{eq:dff}, \eqref{eq:dffs} & 0.00 & 13990 & 509.69 & 7222 & 0.28 & 0.22 \\
\midrule
\multirow{4}{*}{130}
 &(iii) & \eqref{eq:dff} & 0.00 & 17372	& 652.20 & 9079 & 0.28 & 0.29 \\
 &(iv) & \eqref{eq:dff}, \eqref{eq:dffs}, \eqref{eq:dffd} & 0.00 & 12123 & 441.15 & 5211 & 0.21 & 0.17 \\
 &(v) & \eqref{eq:dff}, \eqref{eq:dffd} & 0.00 & 12964 & 476.21 & 5930 & 0.26 &	0.18 \\
 &(vi) & \eqref{eq:dff}, \eqref{eq:dffs} & 0.00 & 15585 & 593.04 & 7741 & 0.22 & 0.27 \\
\midrule
\multirow{4}{*}{140}
 &(iii) & \eqref{eq:dff} & 0.00 & 18878 & 701.57 & 9876 & 0.22 & 0.35 \\
 &(iv) & \eqref{eq:dff}, \eqref{eq:dffs}, \eqref{eq:dffd} & 0.00 & 12653 & 463.34 & 5398 & 	0.16 & 0.24\\
 &(v) & \eqref{eq:dff}, \eqref{eq:dffd} & 0.00 & 13977 & 513.60 & 6510 & 0.21 & 0.25 \\
 &(vi) & \eqref{eq:dff}, \eqref{eq:dffs} & 0.00 & 16415	& 607.97 &	8134 & 0.16 & 0.32\\
\midrule
\multirow{4}{*}{150}
 &(iii) & \eqref{eq:dff} & 0.00 & 20151 & 794.65 & 10811 & 0.18 & 0.38\\
 &(iv) & \eqref{eq:dff}, \eqref{eq:dffs}, \eqref{eq:dffd} & 0.00 & 14069 & 525.01 &	6521 &	0.11 & 0.32 \\
 &(v) & \eqref{eq:dff}, \eqref{eq:dffd} & 0.00 & 16053 & 608.51 & 8094 & 0.16 &	0.34\\
 &(vi) & \eqref{eq:dff}, \eqref{eq:dffs} & 0.00 & 17122	& 643.85 & 8512 & 0.12 & 0.34 \\
\midrule
\multirow{4}{*}{160}
 &(iii) & \eqref{eq:dff} & 0.00 & 19557	& 736.57 & 10401 & 0.14 & 0.42 \\
 &(iv) & \eqref{eq:dff}, \eqref{eq:dffs}, \eqref{eq:dffd} & 0.00 & 13670 & 496.99 & 6520 &	0.09 & 0.33 \\
 &(v) & \eqref{eq:dff}, \eqref{eq:dffd} & 0.00 & 16947 & 635.16 & 9095 & 0.13 &	0.37 \\
 &(vi) & \eqref{eq:dff}, \eqref{eq:dffs} & 0.01 & 15331 & 564.84 & 7208 & 0.09 & 0.37 \\
\midrule
\multirow{4}{*}{170}
 &(iii) & \eqref{eq:dff} & 0.00 & 18403 & 681.88 & 9602 & 0.11 & 0.46\\
 &(iv) & \eqref{eq:dff}, \eqref{eq:dffs}, \eqref{eq:dffd} & 0.00 & 12757 & 455.52 & 5744 & 0.07 & 0.34 \\
 &(v) & \eqref{eq:dff}, \eqref{eq:dffd} & 0.00 & 16263 & 603.00 & 8322 & 0.10 & 0.39 \\
 &(vi) & \eqref{eq:dff}, \eqref{eq:dffs} & 0.00 & 14235 & 505.64 & 6585 & 0.07 & 0.38 \\
\midrule
\multirow{4}{*}{180}
 &(iii) & \eqref{eq:dff} & 0.00 & 17208	& 622.08 & 9143 & 0.10 & 0.51 \\
 &(iv) & \eqref{eq:dff}, \eqref{eq:dffs}, \eqref{eq:dffd} & 0.00 & 11327 & 397.99 & 5022 &	0.06 & 0.35 \\
 &(v) & \eqref{eq:dff}, \eqref{eq:dffd} & 0.00 & 15215 & 554.40 & 7816 & 0.09 & 0.42 \\
 &(vi) & \eqref{eq:dff}, \eqref{eq:dffs} & 0.00 & 12883	& 449.23 & 6086 & 0.06 & 0.44 \\
\bottomrule
\end{tabular}
\end{table}

\begin{table}[!h]
\scriptsize
\centering
\caption{Performance summary of Algorithm~\ref{alg:decomposition} for Approaches~(i) to~(iv) under Regime~(2). For each instance size, average values over 10 random instances are reported. TL indicates that the time limit was reached.}
\label{tab:dem}
\begin{tabular}{crlrrrr}
\toprule
Size & App. & Method & Gap & Itr. & Time & RVDD \\
\midrule
\multirow{4}{*}{(2,2)} & (i) & baseline & 0.00 & 20 & 0.34 & 1.00 \\
 & (ii) & \eqref{eq:jensen}, \eqref{eq:dominant_scenario} & 0.00 & 12 & 0.20 & 0.78 \\
 & (iii) & \eqref{eq:dff} & 0.00 & 7 & 0.13 & 0.50 \\
 & (iv) & \eqref{eq:dff}, \eqref{eq:dffd} & 0.00 & 5 & 0.10 & 0.36 \\
\midrule
\multirow{4}{*}{(2,3)} & (i) & baseline & 0.00 & 69 & 1.19 & 1.00 \\
 & (ii) & \eqref{eq:jensen}, \eqref{eq:dominant_scenario} & 0.00 & 30 & 0.54 & 0.46 \\
 & (iii) & \eqref{eq:dff} & 0.00 & 24 & 0.43 & 0.38 \\
 & (iv) & \eqref{eq:dff}, \eqref{eq:dffd} & 0.00 & 18 & 0.37 & 0.30 \\
\midrule
\multirow{4}{*}{(2,4)} & (i) & baseline & 0.00 & 261 & 4.85 & 1.00 \\
 & (ii) & \eqref{eq:jensen}, \eqref{eq:dominant_scenario} & 0.00 & 76 & 1.50 & 0.30 \\
 & (iii) & \eqref{eq:dff} & 0.00 & 67 & 1.36 & 0.27 \\
 & (iv) & \eqref{eq:dff}, \eqref{eq:dffd} & 0.00 & 54 & 1.13 & 0.22 \\
\midrule
\multirow{4}{*}{(3,3)} & (i) & baseline & 0.00 & 123 & 2.54 & 1.00 \\
 & (ii) & \eqref{eq:jensen}, \eqref{eq:dominant_scenario} & 0.00 & 69 & 1.43 & 0.43 \\
 & (iii) & \eqref{eq:dff} & 0.00 & 56 & 1.17 & 0.31 \\
 & (iv) & \eqref{eq:dff}, \eqref{eq:dffd} & 0.00 & 42 & 0.93 & 0.25 \\
\midrule
\multirow{4}{*}{(3,4)} & (i) & baseline & 0.00 & 537 & 12.38 & 1.00 \\
 & (ii) & \eqref{eq:jensen}, \eqref{eq:dominant_scenario} & 0.00 & 234 & 5.62 & 0.27 \\
 & (iii) & \eqref{eq:dff} & 0.00 & 212 & 5.03 & 0.24 \\
 & (iv) & \eqref{eq:dff}, \eqref{eq:dffd} & 0.00 & 157 & 3.89 & 0.18 \\
\midrule
\multirow{4}{*}{(3,5)} & (i) & baseline & 0.00 & 1788 & 47.38 & 1.00 \\
 & (ii) & \eqref{eq:jensen}, \eqref{eq:dominant_scenario} & 0.00 & 747 & 20.23 & 0.26 \\
 & (iii) & \eqref{eq:dff} & 0.00 & 711 & 19.38 & 0.25 \\
 & (iv) & \eqref{eq:dff}, \eqref{eq:dffd} & 0.00 & 568 & 15.65 & 0.14 \\
\midrule
\multirow{4}{*}{(3,6)} & (i) & baseline & 0.00 & 6980 & 213.98 & 1.00 \\
 & (ii) & \eqref{eq:jensen}, \eqref{eq:dominant_scenario} & 0.00 & 2242 & 70.18 & 0.16 \\
 & (iii) & \eqref{eq:dff} & 0.00 & 2173 & 68.00 & 0.15 \\
 & (iv) & \eqref{eq:dff}, \eqref{eq:dffd} & 0.00 & 1653 & 52.59 & 0.09 \\
\midrule
\multirow{4}{*}{(4,4)} & (i) & baseline & 0.00 & 830 & 22.91 & 1.00 \\
 & (ii) & \eqref{eq:jensen}, \eqref{eq:dominant_scenario} & 0.00 & 460 & 12.97 & 0.31 \\
 & (iii) & \eqref{eq:dff} & 0.00 & 420 & 11.90 & 0.24 \\
 & (iv) & \eqref{eq:dff}, \eqref{eq:dffd} & 0.00 & 313 & 9.04 & 0.19 \\
\midrule
\multirow{4}{*}{(4,5)} & (i) & baseline & 0.00 & 3130 & 101.77 & 1.00 \\
 & (ii) & \eqref{eq:jensen}, \eqref{eq:dominant_scenario} & 0.00 & 1525 & 51.17 & 0.25 \\
 & (iii) & \eqref{eq:dff} & 0.00 & 1443 & 48.78 & 0.23 \\
 & (iv) & \eqref{eq:dff}, \eqref{eq:dffd} & 0.00 & 1182 & 41.03 & 0.18 \\
\midrule
\multirow{4}{*}{(4,6)} & (i) & baseline & 0.00 & 11782 & 476.77 & 1.00 \\
 & (ii) & \eqref{eq:jensen}, \eqref{eq:dominant_scenario} & 0.00 & 6040 & 266.51 & 0.23 \\
 & (iii) & \eqref{eq:dff} & 0.00 & 5826 & 263.62 & 0.23 \\
 & (iv) & \eqref{eq:dff}, \eqref{eq:dffd} & 0.00 & 5336 & 240.26 & 0.14 \\
\midrule
\multirow{4}{*}{(4,7)} & (i) & baseline & 1.09 & 34645 & 1775.81 & 1.00 \\
 & (ii) & \eqref{eq:jensen}, \eqref{eq:dominant_scenario} & 0.00 & 20513 & 1325.78 & 0.22 \\
 & (iii) & \eqref{eq:dff} & 0.01 & 20237 & 1310.74 & 0.22 \\
 & (iv) & \eqref{eq:dff}, \eqref{eq:dffd} & 0.00 & 17908 & 1168.17 & 0.13 \\
\midrule
\multirow{4}{*}{(4,8)} & (i) & baseline & 4.99 & 37026 & TL & 0.45 \\
 & (ii) & \eqref{eq:jensen}, \eqref{eq:dominant_scenario} & 0.17 & 28140 & TL & 0.17 \\
 & (iii) & \eqref{eq:dff} & 0.16 & 28229 & TL & 0.17 \\
 & (iv) & \eqref{eq:dff}, \eqref{eq:dffd} & 0.07 & 27397 & TL & 0.13 \\
\bottomrule
\end{tabular}
\end{table}

\begin{table}
\scriptsize
\centering
\caption{Performance summary of Algorithm~\ref{alg:decomposition} for Approaches~(i) to~(iv) under Regime~(3). For each instance size, average values over 10 random instances are reported. TL indicates that the time limit was reached.}
\label{tab:sup}
\begin{tabular}{crlrrrr}
\toprule
Size & App. & Method & Gap & Itr. & Time & RVSD \\
\midrule
\multirow{4}{*}{(2,2)} & (i) & baseline & 0.00 & 12 & 0.22 & 1.00 \\
 & (ii) & \eqref{eq:jensen}, \eqref{eq:dominant_scenario} & 0.00 & 8 & 0.15 & 0.72 \\
 & (iii) & \eqref{eq:dff} & 0.00 & 6 & 0.11 & 0.51 \\
 & (iv) & \eqref{eq:dff}, \eqref{eq:dffs} & 0.00 & 5 & 0.08 & 0.33 \\
\midrule
\multirow{4}{*}{(2,3)} & (i) & baseline & 0.00 & 38 & 0.70 & 1.00 \\
 & (ii) & \eqref{eq:jensen}, \eqref{eq:dominant_scenario} & 0.00 & 18 & 0.36 & 0.47 \\
 & (iii) & \eqref{eq:dff} & 0.00 & 12 & 0.24 & 0.32 \\
 & (iv) & \eqref{eq:dff}, \eqref{eq:dffs} & 0.00 & 11 & 0.26 & 0.34 \\
\midrule
\multirow{4}{*}{(2,4)} & (i) & baseline & 0.00 & 90 & 1.83 & 1.00 \\
 & (ii) & \eqref{eq:jensen}, \eqref{eq:dominant_scenario} & 0.00 & 34 & 0.75 & 0.39 \\
 & (iii) & \eqref{eq:dff} & 0.00 & 23 & 0.50 & 0.24 \\
 & (iv) & \eqref{eq:dff}, \eqref{eq:dffs} & 0.00 & 21 & 0.49 & 0.24 \\
\midrule
\multirow{4}{*}{(3,3)} & (i) & baseline & 0.00 & 120 & 2.48 & 1.00 \\
 & (ii) & \eqref{eq:jensen}, \eqref{eq:dominant_scenario} & 0.00 & 31 & 0.70 & 0.27 \\
 & (iii) & \eqref{eq:dff} & 0.00 & 14 & 0.33 & 0.10 \\
 & (iv) & \eqref{eq:dff}, \eqref{eq:dffs} & 0.00 & 14 & 0.33 & 0.09 \\
\midrule
\multirow{4}{*}{(3,4)} & (i) & baseline & 0.00 & 423 & 10.09 & 1.00 \\
 & (ii) & \eqref{eq:jensen}, \eqref{eq:dominant_scenario} & 0.00 & 142 & 3.60 & 0.20 \\
 & (iii) & \eqref{eq:dff} & 0.00 & 115 & 2.92 & 0.13 \\
 & (iv) & \eqref{eq:dff}, \eqref{eq:dffs} & 0.00 & 109 & 2.79 & 0.11 \\
\midrule
\multirow{4}{*}{(3,5)} & (i) & baseline & 0.00 & 1210 & 32.86 & 1.00 \\
 & (ii) & \eqref{eq:jensen}, \eqref{eq:dominant_scenario} & 0.00 & 545 & 14.99 & 0.14 \\
 & (iii) & \eqref{eq:dff} & 0.00 & 518 & 14.23 & 0.08 \\
 & (iv) & \eqref{eq:dff}, \eqref{eq:dffs} & 0.00 & 500 & 13.86 & 0.08 \\
\midrule
\multirow{4}{*}{(3,6)} & (i) & baseline & 0.00 & 2575 & 81.31 & 1.00 \\
 & (ii) & \eqref{eq:jensen}, \eqref{eq:dominant_scenario} & 0.00 & 1330 & 42.10 & 0.09 \\
 & (iii) & \eqref{eq:dff} & 0.00 & 1281 & 40.62 & 0.05 \\
 & (iv) & \eqref{eq:dff}, \eqref{eq:dffs} & 0.00 & 1234 & 39.54 & 0.04 \\
\midrule
\multirow{4}{*}{(4,4)} & (i) & baseline & 0.00 & 1494 & 41.70 & 1.00 \\
 & (ii) & \eqref{eq:jensen}, \eqref{eq:dominant_scenario} & 0.00 & 159 & 4.68 & 0.06 \\
 & (iii) & \eqref{eq:dff} & 0.00 & 112 & 3.28 & 0.03 \\
 & (iv) & \eqref{eq:dff}, \eqref{eq:dffs} & 0.00 & 111 & 3.30 & 0.03 \\
\midrule
\multirow{4}{*}{(4,5)} & (i) & baseline & 0.00 & 4699 & 159.51 & 1.00 \\
 & (ii) & \eqref{eq:jensen}, \eqref{eq:dominant_scenario} & 0.00 & 1015 & 34.35 & 0.05 \\
 & (iii) & \eqref{eq:dff} & 0.00 & 915 & 31.12 & 0.03 \\
 & (iv) & \eqref{eq:dff}, \eqref{eq:dffs} & 0.00 & 874 & 31.32 & 0.03 \\
\midrule
\multirow{4}{*}{(4,6)} & (i) & baseline & 0.00 & 18776 & 1136.96 & 1.00 \\
 & (ii) & \eqref{eq:jensen}, \eqref{eq:dominant_scenario} & 0.00 & 9533 & 436.58 & 0.04 \\
 & (iii) & \eqref{eq:dff} & 0.00 & 9367 & 446.97 & 0.02 \\
 & (iv) & \eqref{eq:dff}, \eqref{eq:dffs} & 0.00 & 8783 & 442.97 & 0.02 \\
\midrule
\multirow{4}{*}{(4,7)} & (i) & baseline & 0.10 & 30886 & TL & 1.00 \\
 & (ii) & \eqref{eq:jensen}, \eqref{eq:dominant_scenario} & 0.03 & 29920 & 1628.97 & 0.03 \\
 & (iii) & \eqref{eq:dff} & 0.03 & 29262 & 1635.43 & 0.02 \\
 & (iv) & \eqref{eq:dff}, \eqref{eq:dffs} & 0.04 & 28754 & 1619.23 & 0.02 \\
\midrule
\multirow{4}{*}{(4,8)} & (i) & baseline & 0.16 & 27240 & TL & 1.00 \\
 & (ii) & \eqref{eq:jensen}, \eqref{eq:dominant_scenario} & 0.05 & 32352 & TL & 0.02 \\
 & (iii) & \eqref{eq:dff} & 0.05 & 31534 & TL & 0.01 \\
 & (iv) & \eqref{eq:dff}, \eqref{eq:dffs} & 0.05 & 30889 & TL & 0.01 \\
\bottomrule
\end{tabular}
\end{table}